\newcommand{\seq}{{\rm Seq}}
\newcommand{\ii}{ \textbf{\textit{i}}}
\newcommand{\jj}{ \textbf{\textit{j}}}
\newcommand{\kk}{ \textbf{\textit{k}}}
\newcommand{\conf}{{\rm conf}}
\newcommand{\con}{\xy (-4,0)*{}; (4.5,0)*{}; (-3,0)*{};(3,0)*{} **\dir{-};  \endxy}
\newcommand{\refequal}[1]{\xy {\ar@{=}^{#1}
(-1,0)*{};(1,0)*{}};
\endxy}
\newcommand{\BOX}{\hbox {$\sqcap$ \kern -1em $\sqcup$}}
\renewcommand{\to}{\rightarrow}
\newcommand{\maps}{\colon}
\newcommand{\scs}{\scriptstyle}
\theoremstyle{definition}
\newtheorem{thm}{Theorem}[section]
\newtheorem{cor}[thm]{Corollary}
\newtheorem{lem}[thm]{Lemma}
\newtheorem{rem}[thm]{Remark}
\newtheorem{prop}[thm]{Proposition}
\newtheorem{defn}[thm]{Definition}
\newtheorem{example}[thm]{Example}
\numberwithin{equation}{section}
\let\hat=\widehat
\let\phi=\varphi
\let\epsilon=\varepsilon
\def\N{{\mathbbm N}}
\def\R{{\mathbbm R}}
\def\Z{{\mathbbm Z}}
\def\cal#1{\mathcal{#1}}%
\def\1{\mathbbm{1}}%
\def\nn{\notag}
\def\mf{\mathfrak}
\newcommand{\up}[1]{\xybox{
   (-3,-13)*{};
  (3,8)*{};
 (0,0)*{\includegraphics[scale=0.5]{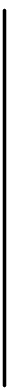}};
 (-.1,-12)*{\scs #1};
 }}
 \newcommand{\updot}[1]{\xybox{
   (-3,-13)*{};
  (3,8)*{};
 (0,0)*{\includegraphics[scale=0.5]{up.eps}};
 (-.1,-12)*{\scs #1};(0,0)*{\bullet};
 }}
\renewcommand{\sup}[1]{\xybox{
   (-3,-7)*{};
  (3,6)*{};
 (0,0)*{\includegraphics[scale=0.5]{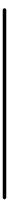}};
 (-.1,-7)*{\scs #1};
 }}
 \newcommand{\supdot}[1]{\xybox{
   (-3,-7)*{};
  (3,6)*{};
 (0,0)*{\includegraphics[scale=0.5]{short_up.eps}};
 (-.1,-7)*{\scs #1}; (0,0)*{\bullet};
 }}
\newcommand{\dcross}[2]{\xybox{
 (-6,-7)*{};
 (6,6)*{};
 (0,0)*{\includegraphics[scale=0.5]{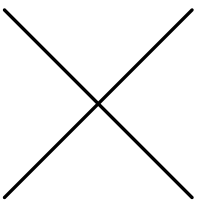}};
 (-5.1,-7)*{\scs #1};
 (4.7,-7)*{\scs #2};
}}
\newcommand{\dcrossul}[2]{\xybox{
(-2.5,2.5)*{\bullet};
 (-6,-7)*{};
 (6,6)*{};
 (0,0)*{\includegraphics[scale=0.5]{dcross.eps}};
 (-5.1,-7)*{\scs #1};
 (4.7,-7)*{\scs #2};
}}
\newcommand{\dcrossur}[2]{\xybox{
(2.5,2.5)*{\bullet};
 (-6,-7)*{};
 (6,6)*{};
 (0,0)*{\includegraphics[scale=0.5]{dcross.eps}};
 (-5.1,-7)*{\scs #1};
 (4.7,-7)*{\scs #2};
}}
\newcommand{\dcrossdl}[2]{\xybox{
(-2.5,-2.5)*{\bullet};
 (-6,-7)*{};
 (6,6)*{};
 (0,0)*{\includegraphics[scale=0.5]{dcross.eps}};
 (-5.1,-7)*{\scs #1};
 (4.7,-7)*{\scs #2};
}}
\newcommand{\dcrossdr}[2]{\xybox{
(2.5,-2.5)*{\bullet};
 (-6,-7)*{};
 (6,6)*{};
 (0,0)*{\includegraphics[scale=0.5]{dcross.eps}};
 (-5.1,-7)*{\scs #1};
 (4.7,-7)*{\scs #2};
}}
\newcommand{\twocross}[2]{\xybox{
 (-6,-13)*{};
 (6,8)*{};
 (0,0)*{\includegraphics[scale=0.5]{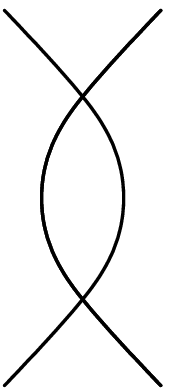}};
 (-4.1,-12)*{\scs #1};
 (3.7,-12)*{\scs #2};
}}
\newcommand{\linecrossL}[3]{\xybox{
 (-6,-13)*{};
 (6,8)*{};
 (0,0)*{\includegraphics[scale=0.5]{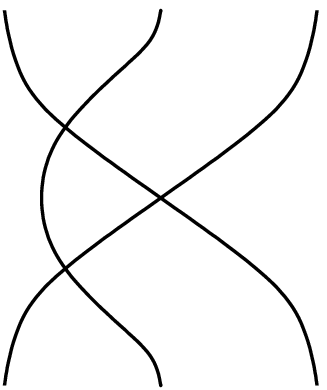}};
 (-8,-12)*{\scs #1};
 (0,-12)*{\scs #2};
 (8,-12)*{\scs #3};
}}
\newcommand{\linecrossR}[3]{\xybox{
 (-6,-13)*{};
 (6,8)*{};
 (0,0)*{\includegraphics[angle=180, scale=0.5]{line_cross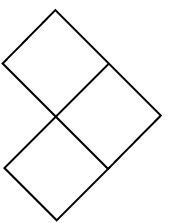}};
 (-8,-12)*{\scs #1};
 (0,-12)*{\scs #2};
 (8,-12)*{\scs #3};
}}
\title{Nilpotency in type $A$ cyclotomic quotients}
\author{Alexander E. Hoffnung and Aaron D. Lauda}
\date{\today}
\begin{document}
%

\maketitle

\begin{abstract}
We prove a conjecture of Brundan and Kleshchev on the nilpotency degree of cyclotomic quotients of rings that categorify one-half of quantum $sl(k)$.
\end{abstract}
\newcommand{\Ra}{R'}
%
%

%
\section{Introduction}
%

Let $\Gamma$ denote the quiver associated to a simply-laced Kac-Moody algebra
$\mf{g}$.  Let $\Z[I]$ denote the free abelian group on the set of vertices $I$ of $\Gamma$.
There is a bilinear Cartan form on $\Z[I]$ given on the basis elements $i,j\in I$ by
$$
      i\cdot j = \begin{cases} 2 & \textrm{if $i=j$}, \\
      -1 & \textrm{if $i$ and $j$ are joined by an edge}, \\
      0 & \textrm{otherwise}. \end{cases}
$$
We sometimes write $ i \con j$ for $i \cdot j =-1$.

For a Kac-Moody Lie algebra $\mf{g}$ associated to an arbitrary Cartan datum,  a graded algebra $R$ was defined in \cite{KL,KL2} and shown to categorify $U^-_q(\mf{g})$, the integral form of the negative half of the quantum universal enveloping algebra.  These algebras also appear in a categorification of the entire quantum group~\cite{KL3}, and in the 2-representation theory of Kac-Moody algebras~\cite{Ro}. Given a field $\Bbbk$, the $\Bbbk$-algebra $R$ is defined by finite $\Bbbk$-linear combinations of braid--like diagrams in the plane, where each strand is coloured by a vertex $i \in I$.  Strands can intersect and can carry dots; however, triple intersections are not allowed.  Diagrams are considered up to planar isotopy that do not change the combinatorial type of the diagram.  We recall the local relations for simply-laced Cartan datum:
\begin{eqnarray} \label{eq_UUzero}
   \xy   (0,0)*{\twocross{i}{j}}; \endxy
 & = & \left\{
\begin{array}{ccc}
  0 & \qquad & \text{if $i=j$, } \\ \\
  \xy (0,0)*{\sup{i}};  (8,0)*{\sup{j}};  \endxy
  & &
 \text{if $i \cdot j=0$, }
  \\    \\
  \xy  (0,0)*{\supdot{i}};   (8,0)*{\sup{j}};  \endxy
  \quad + \quad
   \xy  (8,0)*{\supdot{j}};  (0,0)*{\sup{i}};  \endxy
 & &
 \text{if $i \cdot j=-1$. }
\end{array}
\right.
\end{eqnarray}
\begin{eqnarray}\label{eq_ijslide}
  \xy  (0,0)*{\dcrossul{i}{j}};  \endxy
 \quad  = \;\;
   \xy  (0,0)*{\dcrossdr{i}{j}};   \endxy
& \quad &
   \xy  (0,0)*{\dcrossur{i}{j}};  \endxy
 \quad = \;\;
   \xy  (0,0)*{\dcrossdl{i}{j}};  \endxy \qquad \text{for $i \neq j$}
\end{eqnarray}
\begin{eqnarray}        \label{eq_iislide1}
 \xy  (0,0)*{\dcrossul{i}{i}}; \endxy
    \quad - \quad
 \xy (0,0)*{\dcrossdr{i}{i}}; \endxy
  & = &
 \xy (-3,0)*{\sup{i}}; (3,0)*{\sup{i}}; \endxy \\      \label{eq_iislide2}
  \xy (0,0)*{\dcrossdl{i}{i}}; \endxy
 \quad - \quad
 \xy (0,0)*{\dcrossur{i}{i}}; \endxy
  & = &
 \xy (-3,0)*{\sup{i}}; (3,0)*{\sup{i}}; \endxy
\end{eqnarray}
\begin{eqnarray}      \label{eq_r3_easy}
\xy  (0,0)*{\linecrossL{i}{j}{k}}; \endxy
  &=&
\xy (0,0)*{\linecrossR{i}{j}{k}}; \endxy
 \qquad \text{unless $i=k$ and $i \cdot j=-1$   \hspace{1in} }
\\                   \label{eq_r3_hard}
\xy (0,0)*{\linecrossL{i}{j}{i}}; \endxy
  &-&
\xy (0,0)*{\linecrossR{i}{j}{i}}; \endxy
 \quad = \quad
 \xy  (-9,0)*{\up{i}};  (0,0)*{\up{j}}; (9,0)*{\up{i}}; \endxy
 \qquad \text{if $i \cdot j=-1$ }
\end{eqnarray}
Multiplication is given by concatenation of diagrams.  For more details see \cite{KL,KL2}.  The results in this note do not depend on the ground field $\Bbbk$; they remain valid when considering the ring $R$ defined as above with $\Z$-linear combinations of diagrams.

For $\nu=\sum_{i \in I} \nu_i \cdot i \in \N[I]$ write $\seq(\nu)$ for the subset of $I^m$ consisting of those sequences of vertices $\ii = i_1i_2\cdots i_m$ where $i_k\in I$ and vertex $i$ appears $\nu_i$ times.  The length $m$ of the sequence is equal to $|\nu|$.  Define
${\rm Supp}(\nu):=\{ i \mid \nu_i \neq 0\}$. The ring $R$ decomposes as
\begin{equation}
R = \bigoplus_{\nu \in \N[I]} R(\nu)
\end{equation}
where $R(\nu)$ is the subring generated by diagrams that contain $\nu_i$ strands coloured $i$ for each $i\in{\rm Supp}(\nu) $.  We write $1_{\ii}$ for the diagram with only vertical lines and no crossings, where the strands are coloured by the sequence $\ii$. The element $1_{\ii}$ is an idempotent of the
ring $R(\nu)$.  The rings $R(\nu)$ decompose further as
\begin{equation}
R(\nu) = \bigoplus_{\ii,\jj \in
\seq(\nu) }{_{\jj}R(\nu)_{\ii}}
\end{equation}
where ${_{\jj}R(\nu)_{\ii}} := 1_{\jj}R(\nu)1_{\ii}$ is the abelian group of all
linear combinations of diagrams with sequence $\ii$ at the bottom and sequence
$\jj$ at the top modulo the above relations.

Sometimes it is convenient to convert from graphical to algebraic notation. For a
sequence $\ii=i_1 i_2\dots i_m\in \seq(\nu)$ and $1\le r \le m$ we denote
\begin{eqnarray} \label{eq_dot_xki}
  x_{r,\ii} \quad := \quad
  \xy (0,0)*{\sup{}};  (0,-6)*{\scs i_1};  \endxy
    \dots
    \xy (0,0)*{\supdot{}};  (0,-6)*{\scs i_r}; \endxy
   \dots
    \xy (0,0)*{\sup{}};  (1,-6)*{\scs i_m}; \endxy
\end{eqnarray}
and
\begin{eqnarray} \label{eq_crossing_delta}
  \delta_{r,\ii} \quad := \quad
  \xy  (0,0)*{\sup{}};  (0,-6)*{\scs i_1};   \endxy
    \dots
  \xy (0,0)*{\dcross{i_r}{\; \; i_{r+1}}}; \endxy
   \dots
  \xy (0,0)*{\sup{}}; (1,-6)*{\scs i_m}; \endxy
\end{eqnarray}
The symmetric group $S_m$, where $m=|\nu|$, acts on $\seq(\nu)$  by permutations.  The transposition $s_r=(r,r+1)$ switches entries $i_r, i_{r+1}$ of $\ii$.  Thus, $\delta_{r,\ii} \in {_{s_r(\ii)}R(\nu)_{\ii}}$.

For $\Lambda=\sum_{i \in I} \lambda_i \cdot i \in \N[I]$ the ${\rm level}$ of $\Lambda$ is $\ell(\Lambda) = \sum_{i \in I} \lambda_i$. Let $\cal{J}_{\Lambda}$ be the ideal of $R(\nu)$ generated by elements $x_{1,\ii}^{\lambda_{i_1}}$ over all sequences $\ii=i_1\dots i_m \in \seq(\nu)$. Define the cyclotomic quotient of
the ring $R(\nu)$ at weight $\Lambda$ as the quotient
\begin{equation}
R_{\nu}^{\Lambda} := R_{\nu}/ \cal{J}_{\Lambda}.
\end{equation}
In terms of the graphical calculus the cyclotomic quotient $R_{\nu}^{\Lambda}$ is
the quotient of $R(\nu)$ by the ideal generated by
\begin{eqnarray}
  \xy (0,0)*{\supdot{i_1}}; (-4,2)*{\lambda_{i_1}};
  (6,0)*{\sup{\;\; i_2}};
  (12,0)*{\cdots};
  (18,0)*{\sup{\;\;\; i_m}}
  \endxy &=& 0 \label{eq_quotient1}
\end{eqnarray}
over all sequences $\ii$ in $\seq(\nu)$.  It was conjectured in \cite{KL} that $R_{\nu}^{\Lambda}$ categorifies the integrable representations of $U_q(\mf{g})$ of highest weight $\Lambda$. The
quotients $R_{\nu}^{\Lambda}$ are called cyclotomic quotients because they should
be the analogues of the Ariki-Koike cyclotomic Hecke algebras for other types.

This conjecture has been proven by Kleshchev and Brundan in type $A$~\cite{BK1, BK2}.  They construct an isomorphism
\[
 \xy {\ar^{\sim}(-8,0)*+{ R_{\nu}^{\Lambda}}; (8,0)*+{H_{\nu}^{\Lambda}},}; \endxy
\]
where $H_{\nu}^{\Lambda}$ is a block of the cyclotomic affine Hecke algebra $H_{m}^{\Lambda}$.   Ariki's categorification theorem~\cite{Ari1} gives an isomorphism between the integrable highest weight representation $V(\Lambda)$ for $U(\hat{\mathfrak{sl}_e})$ and the Grothendieck ring $\bigoplus_{m} K_0(H_{m}^{\Lambda})$ of finitely generated projective modules.  The isomorphism  $R_{\nu}^{\Lambda} \cong H_{\nu}^{\Lambda}$ induces a $\Z$-grading on blocks of cyclotomic Hecke algebras.   Brundan and Kleshchev use this grading to prove the cyclotomic quotient conjecture for type $A$. This can be viewed as a graded version of Ariki's categorification theorem.  A generalization of this conjecture to any simply-laced type should follow from the work of Varagnolo and Vasserot~\cite{VV} and the combinatorics of crystal graphs.

Brundan and Kleshchev's $\Z$-grading on blocks of cyclotomic Hecke algebras gives rise to a new grading on blocks of the symmetric group, enabling the study of graded representations of the symmetric group~\cite{BKW,BK2} and the construction of graded Specht modules~\cite{BKW}.  We also remark that prior to Brundan and Kleshchev's work, Brundan and Stroppel~\cite{BS} established the cyclotomic quotient conjecture for level two representations at $q=1$ in type $A_{\infty}$.

Even with Brundan and Kleshchev's proof of the cyclotomic quotient conjecture in type $A$, it is still difficult to construct an explicit basis for cyclotomic quotients $R_{\nu}^{\Lambda}$.  Brundan and Kleshchev's proof of the cyclotomic quotient conjecture utilizes the isomorphism $R_{\nu}^{\Lambda} \cong H_{\nu}^{\Lambda}$.  However, this isomorphism is rather sophisticated and does not directly lead to an explicit homogeneous basis for $R_{\nu}^{\Lambda}$ in type $A$.  For example, Brundan and Kleshchev conjecture~\cite[Conjecture 2.3]{BK1} that for type $A_{\infty}$ the nilpotency of the generator $x_{r,\ii}$ is less than or equal to the level $\ell(\Lambda)$.

In this note we define an upper bound $b_r=b_r(\ii)$, called the antigravity bound, for the nilpotency of the generator $x_{r,\ii}$ in $R_{\nu}^{\Lambda}$.  We prove by induction that $x_{r,\ii}^{b_r}=0$. Our upper bound implies Brundan and Kleshchev's nilpotency conjecture since $b_r$ is always less than or equal to the level $\ell(\Lambda)$. Methods used in our proof may be relevant for determining the nilpotency degrees for generators $x_{r,\ii}$ in other types.  We hope that understanding these nilpotency degrees will be a step towards constructing explicit homogeneous monomial bases for these quotients.

The bound is most naturally understood using the combinatorial device of `bead and runner' diagrams used by Kleshchev and Ram~\cite{KR} in their study of homogeneous representations of rings $R(\nu)$.  Kleshchev and Ram give a way to turn a sequence $\ii \in \seq(\nu)$ into a configuration of numbered beads on runners coloured by the vertices of $\Gamma$.  The main idea of our proof is to study bead and runner diagrams in `anti-gravity'.

To prove the induction step we show that either the nilpotency of $x_{m, \ii}$ can be determined from the nilpotency of some $x_{m',\ii'}$ where $m' < m$, $|\ii'| < |\ii|$, and $b_{m'}(\ii')=b_m(\ii)$, or the sequence $\ii$ has a special form.  Sequences $\ii$ with this special form are called stable antigravity sequences and they are characterized in terms of bead and runner diagrams associated to the sequence $\ii$.  For stable antigravity sequences we prove directly that the antigravity bound holds.

For the readers convenience we include a table summarizing the notation used by the second author in collaboration with Khovanov and the notation used by Brundan and Kleshchev.  In this note we write $\Lambda=\sum_i \lambda_i \cdot i \in \N[I]$ for a dominant integral weight, and we write the corresponding cyclotomic quotient as $R_{\nu}^{\Lambda}$.
\medskip

\begin{tabular}{|c|c|c|}
  \hline
 Description &{\bf Brundan-Kleshchev} & {\bf Khovanov-Lauda} \\ \hline \hline
 graph, vertex set & $\Gamma, I$  &  same  $\xy (0,4)*{}; (0,-3)*{};\endxy$\\
 \hline
 lattices indexed by $I$ & $P:=\bigoplus_{i \in I} \Z \Lambda_i$, $Q:= \bigoplus_{i \in I} \Z \alpha_i$ &  $\Z[I]$ $\xy (0,4)*{}; (0,-3)*{};\endxy$
 \\ \hline
 positive root& $\alpha\in Q^+$  &
 $\nu=\sum_{i \in I} \nu_i \cdot i \in \N[I]$ $\xy (0,4)*{}; (0,-3)*{};\endxy$ $\xy (0,4)*{}; (0,-3)*{};\endxy$\\
  \hline
  set of sequences & $I^{\alpha}:= \scs \{\ii = (i_1, \dots, i_d)|\alpha_{i_1} + \dots + \alpha_{i_d}=\alpha \}$ & $\seq(\nu)$ $\xy (0,4)*{}; (0,-3)*{};\endxy$\\
  \hline
  $\txt{length of sequence\\ $\ii=i_1i_2\cdots i_m$}$ & ${\rm ht}(\alpha)=\sum_{i \in I}(\Lambda_i,\alpha)$ &
  $|\nu| = \sum_{i \in I} \nu_i$ $\xy (0,6)*{}; (0,-7)*{};\endxy$
  \\ \hline
  idempotents & $e(\ii)$ & $1_{\ii}$$\xy (0,4)*{}; (0,-3)*{};\endxy$ \\ \hline
  \txt{dot on $r$th strand \\of sequence $\ii$} & $y_r e(\ii)$  & $x_{r,\ii}$ $\xy (0,4)*{};
  (0,-3)*{};\endxy$\\ \hline
  \txt{crossing of $r$th and $r+1$st \\strand of sequence $\ii$} & $\psi_r e(\ii)$
  & $\delta_{r,\ii}$ $\xy (0,6)*{}; (0,-6)*{};\endxy$\\ \hline
   rings and quotients & $R_{\alpha}$, $R_{\alpha}^{\Lambda}$  & $R(\nu)$,
   $R(\nu,\lambda)$ $\xy (0,4)*{}; (0,-3)*{};\endxy$\\
   \hline
\end{tabular}

\bigskip

\noindent {\it Acknowledgments:}  We thank Mikhail Khovanov and Alexander Kleshchev for valuable discussions.   We also thank Ben Elias for comments on a previous version.  AH was supported by the National Science Foundation under Grant No.\ 0653646.  AL was partially supported by the NSF grants  DMS-0739392
and DMS-0855713.

%
\section{Quotients in type $A_{\infty}$}
%

Consider the quiver $\Gamma$ of type $A_{\infty}$, where we identify the
vertex set $I$ with $\Z$:
\begin{equation}
  \Gamma =    \qquad \quad  \xy
  (-35,0)*+{-1}="-1";
  (-25,0)*+{0}="0";
  (-15,0)*+{1}="1";
  (-5, 0)*+{2}="2";
  (5,  0)*+{3}="3";
  (-45,0); "-1" **\dir{-};"-1";"0" **\dir{-}; "0";"1" **\dir{-};
  "1";"2" **\dir{-};"2";"3" **\dir{-};"3";(15,0) **\dir{-};
  (20,0)*{\cdots };
  (-50,0)*{\cdots };
  \endxy \label{eq_gamma_gl}
\end{equation}
Vertex $i$ is connected by an edge to vertex $j$ if and only if $j=i\pm 1$.

%
\subsection{Bead and runner diagrams}
%

To a sequence $\ii = i_1\dots i_m$ and an elementary transposition $s_r$ in the symmetric group $S_m$ we can associate the crossing $\delta_{r,\ii}$ in $R(\nu)$.  A transposition $s_r$ is called an admissible transposition if the corresponding element $\delta_{r,\ii}$ in $R(\nu)$ has degree zero.  This happens when the crossing $\delta_{r,\ii}$ involves strands coloured by vertices not connected by an edge in $\Gamma$.  For $\nu \in \N[I]$ the weight graph $G_{\nu}$ has as its vertices all the sequences $\ii \in \seq(\nu)$.  Sequences $\ii$ and $\jj$ are connected by an edge in $G_{\nu}$ if $\ii = s_r(\jj)$ for an admissible transposition $s_r$.

\noindent\parbox{4.5in}{\quad \; We recall the parametrization of the connected components of $G_{\nu}$ due to Kleshchev and Ram~\cite[Section 2.5]{KR}.  The set $I\times R_{\geq 0}$ is called a $\Gamma$-abacus.  For a vertex $i\in I$, the subset $\{i\}\times \R_{\geq 0}$ is called a runner of the $\Gamma$-abacus, or the runner coloured by the vertex $i$.  We slide `beads', whose shape depends on $\Gamma$, onto the runners of the $\Gamma$-abacus and gravity pulls the beads down the runners creating a {\em bead and runner diagram}. On the right is an example for $D_5$ of a $\Gamma$-abacus with 3 beads on various runners.  Bead and runner diagrams can be understood in terms of heaps introduced by Viennot~\cite{Vie}.}
\qquad \quad\parbox{0.9in}{$ \xy
  (0,0)*{\includegraphics[scale=0.5]{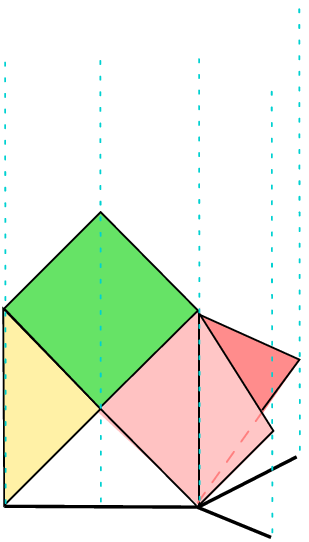}};
 \endxy$} \medskip

Fix  $\nu=\sum_{i \in I}\nu_i \cdot i \in \N[I]$ with $|\nu|=m$. A configuration $\lambda$ of type $\nu$ is obtained by placing $m$ beads on the runners with $\nu_i$ beads placed on the runner $i$ for each $i \in I$.  If $\lambda$ is a configuration of type $\nu$, then we write ${\rm Supp}(\lambda):={\rm Supp}(\nu)$, which can be thought of as those runners $i$ with at least one bead on them.  A $\lambda$-tableau is a bijection $T \maps \{1,2,\dots,m\} \to \{\text{beads of $\lambda$} \}$.  A bead is removable if it can be slid off its runner without interfering with other beads.  A {\em standard $\lambda$-tableau} is a special numbering of the beads: the largest numbered bead is removable, after removing this bead the next largest numbered bead is removable, and so on until all the beads are removed.  An example for $\Gamma$ an infinite chain appears on the left side of \eqref{eq_abacus}.

Given $\ii =i_1\dots i_m\in \seq(\nu)$ we define a standard $\lambda$-tableau $T^{\ii}$ by placing a bead labelled $1$ onto the runner coloured $i_1$, then a bead labelled $2$ onto the runner coloured $i_2$, and so on until the last bead labelled $m$ is placed onto runner coloured $i_m$.  The resulting configuration of beads on the abacus, disregarding the numbers labelling the beads,  is denoted by $\conf(\ii)$.  Given a standard $\lambda$-tableau $T$ we get a sequence $\ii^{T} = i_1 \dots i_m$ in $\seq(\nu)$, where $i_a \in I$ is the colour of the runner that the $a$th bead is on.

\begin{prop}[Kleshchev-Ram~\cite{KR},  Proposition 2.4] \label{prop_KR}
Two sequences $\ii$ and $\jj $ in $\seq(\nu)$ are in the same connected component of the weight graph $G_{\nu}$ if and only if $\conf(\ii) = \conf(\jj)$.  Moreover,  the assignments $\ii \mapsto T^{\ii}$ and $T \mapsto \ii^{T}$ are mutually inverse bijections between the set of standard $\lambda$-tableau and the set of all sequences $\ii$ in $\seq(\nu)$ with $\conf(\ii)=\lambda$.
\end{prop}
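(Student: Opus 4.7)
The plan is to prove the two assertions in turn: the bijection first, then the characterization of connected components.

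For the bijection, I would check directly that $\ii \mapsto T^{\ii}$ always produces a standard $\conf(\ii)$-tableau: the bead labelled $m$ is placed last, so it sits on top of its runner and is therefore removable, and deleting it leaves the tableau $T^{\ii'}$ for $\ii' = i_1 \cdots i_{m-1}$, to which induction on $m$ applies. Conversely, a standard $\lambda$-tableau $T$ can be built by reversing its removal sequence, which is exactly what the recipe $T \mapsto \ii^T$ encodes. The two constructions are mutually inverse by inspection, and both respect the constraint $\conf = \lambda$.

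For the ``only if'' direction of the main assertion, an admissible transposition $s_r$ swaps the order of placement of two beads on runners that are neither equal nor adjacent in $\Gamma$. Since such beads do not interact geometrically on the abacus, reversing their order of placement produces the same configuration, so $\conf(s_r \ii) = \conf(\ii)$.

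The ``if'' direction is the main content, and I would argue by induction on $m=|\nu|$. Suppose $\conf(\ii) = \conf(\jj) = \lambda$, and let $b$ be the physical bead labelled $m$ in $T^{\jj}$, located on runner $j_m$. In $T^{\ii}$ the same bead carries some label $k$, so $i_k = j_m$. The key lemma is that $\ii$ can be transformed via admissible transpositions into a sequence $\tilde{\ii}$ whose final entry corresponds to $b$ (in particular $\tilde{i}_m = j_m$). Granting this, the truncations of $\tilde{\ii}$ and $\jj$ obtained by removing the last strand both have configuration $\lambda$ with $b$ removed, so the inductive hypothesis applied in $\seq(\nu-j_m)$ yields $\tilde{\ii} \sim \jj$, whence $\ii \sim \tilde{\ii} \sim \jj$ in $G_\nu$.

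The key lemma reduces to maximality of $b$ in the heap $\lambda$: since $b$ is the bead placed last in the construction of $T^{\jj}$, no bead of $\lambda$ sits above $b$ on the same or an adjacent runner, and this property is intrinsic to $\lambda$ rather than to a choice of standard tableau. Applied to $T^{\ii}$, this forces $i_r$ to be neither equal to nor adjacent to $i_k$ for every $r > k$ (otherwise the bead labelled $r$ in $T^{\ii}$ would lie above $b$, contradicting maximality). Consequently the successive transpositions $s_k, s_{k+1}, \ldots, s_{m-1}$ are each admissible as they act on the evolving sequence, and they push the entry $j_m$ from position $k$ to position $m$. The main obstacle is making precise the heap-theoretic interpretation of bead and runner diagrams so that maximality of $b$ translates cleanly into the commutation statement on $\ii$; once that geometric setup is in place, the induction closes immediately.
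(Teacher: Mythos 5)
The paper does not prove this proposition; it is imported verbatim from Kleshchev--Ram \cite{KR} (their Proposition 2.4), so there is no in-paper argument to compare against. Your proof is correct and is the standard heaps-of-pieces argument (Viennot's commutation/normal-form theorem for trace monoids, which is essentially what Kleshchev and Ram do as well): the bijection by induction on removing the maximal bead, the easy direction by noting that swapping two consecutively placed non-interacting beads leaves the configuration unchanged, and the hard direction by locating the last-placed bead of $T^{\jj}$ inside $T^{\ii}$, commuting it to the end, and inducting. The one point you rightly flag as needing care is the claim that maximality of $b$ is intrinsic to $\lambda$: concretely, you need that for two beads on equal or adjacent runners the settled configuration determines which was placed later (so that ``bead $r$ placed after $b$ on a concurrent runner'' forces bead $r$ to lie above $b$ in $\lambda$, contradicting maximality). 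For type $A_\infty$ with the diamond-shaped beads this is clear from the geometry, and once granted your induction closes; there is no gap beyond this acknowledged formalization step.
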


%
\subsection{Antigravity}
%

Bead and runner diagrams in type $A_{\infty}$ are closely related to the `Russian' notation for Young diagrams.  The advantage of `Russian' notation is that it takes `gravity' into account -- beads are pulled to the bottom of a bead and runner diagram.   In constructing our nilpotency bound `antigravity' will play an equally important role.

To study bead and runner diagrams in antigravity we choose a bead on the diagram and anchor it in place.  Rather than beads sliding down the abacus via gravity, beads not trapped below the anchored bead are pulled off the runners by {\em antigravity}.   In the example below, the box labelled by `13' is the anchored bead.
\begin{equation} \label{eq_abacus}
 \xy
  (0,0)*{\includegraphics[scale=0.8]{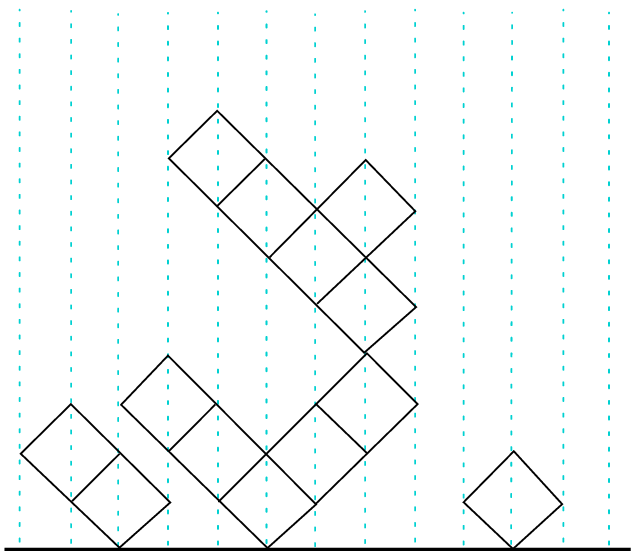}};
   (-4,-18)*{1}; (16,-18)*{7}; (-16,-18)*{3};
    (-20,-14)*{4}; (0,-14)*{2}; (-8,-14)*{5};
     (4,-10)*{6}; (-12,-10)*{11};  (4,-2)*{8};  (0,2)*{9};  (4,6)*{12}; (-4,6)*{10};
  (-8,10)*{13};
  (-21,-25)*{\scs -4};(-17,-25)*{\scs -3};(-13,-25)*{\scs -2}; (-9,-25)*{\scs -1};(-4,-25)*{\scs 0};(0,-25)*{\scs 1};(4,-25)*{\scs 2};(8,-25)*{\scs 3}; (12,-25)*{\scs 4}; (16,-25)*{\scs 5};
 \endxy
 \quad \xy {\ar^{\txt{antigravity}} (-9,0)*{}; (9,0)*{}}\endxy
  \xy
  (0,0)*{\includegraphics[scale=0.8]{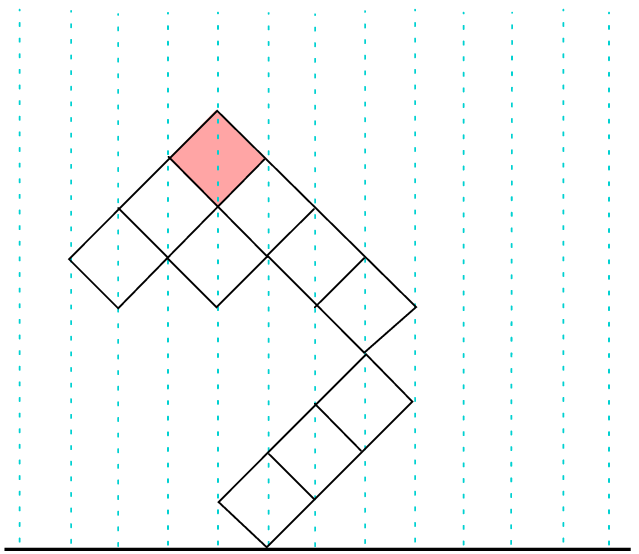}};
   (-4,-18)*{1}; (-16,2)*{3};
    (0,-14)*{2}; (-8,2)*{5};
     (4,-10)*{6}; (-12,6)*{11};  (4,-2)*{8};  (0,2)*{9};  (-4,6)*{10};
  (-8,10)*{\mathbf{13}};
    (-21,-25)*{\scs -4};(-17,-25)*{\scs -3};(-13,-25)*{\scs -2}; (-9,-25)*{\scs -1};(-4,-25)*{\scs 0};(0,-25)*{\scs 1};(4,-25)*{\scs 2};(8,-25)*{\scs 3}; (12,-25)*{\scs 4}; (16,-25)*{\scs 5};
 \endxy
\end{equation}
Boxes labelled `4', `7', and `12' have been slid off the abacus by antigravity.  Boxes labelled `3', `5', and `11' are slid up the abacus towards the anchored bead.

An {\em antigravity configuration} $a$ is a bead and runner diagram in antigravity for some choice of anchored bead.   We say that an antigravity configuration $a$ is of type $\nu=\sum_i \nu_i \cdot i$ if there are $\nu_i$ beads on runner $i$ in antigravity.  An antigravity configuration can be regarded as an ordinary configuration, also denoted $a$, by restoring ordinary gravity so that the remaining beads slide down the abacus.  Hence, for an antigravity configuration $a$ of type $\nu$, write ${\rm Supp}(a)= \{i \mid \nu_i \neq 0 \}$.  This is the same as ${\rm Supp}(a)$, where $a$ is regarded as an ordinary configuration.


\paragraph{Antigravity moves:}  Given a configuration of beads on a bead and runner diagram, considered in antigravity for some fixed bead, the following moves alter the antigravity configuration of the beads.
\begin{enumerate}[1)]
 \item square move:
$ \xy
  (0,0)*{\includegraphics[scale=0.75]{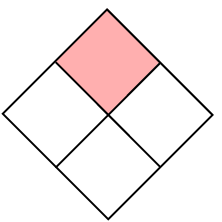}};
 \endxy \quad \mapsto \quad
  \xy
  (0,0)*{\includegraphics[scale=0.75]{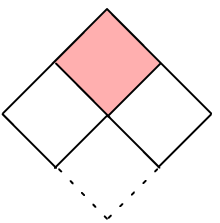}};
 \endxy \quad \mapsto \quad  \xy
  (0,0)*{\includegraphics[scale=0.75]{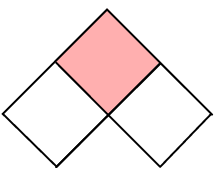}};
 \endxy$

The shaded box indicates the anchor.  This move removes the lower bead in the square configuration and is only applied when the top box in the square is the anchor.

 \item stack move:
$  \xy
  (0,0)*{\includegraphics[scale=0.75]{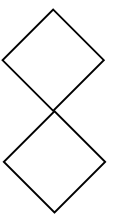}};
 \endxy \quad \mapsto \quad  \xy
  (0,0)*{\includegraphics[scale=0.75]{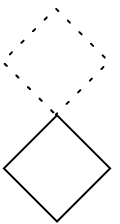}};
 \endxy\quad \mapsto \quad  \xy
  (0,0)*{\includegraphics[scale=0.75]{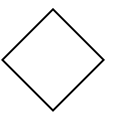}};
 \endxy$

The stack move is applicable only when there are no beads in between the two stacked beads.  In the diagram the top bead is destroyed without affecting other beads.  After applying this move, beads not held in place by the anchored bead slide freely up the abacus in antigravity.

  \item  $L$-move: the $L$-move destroys the lowest box in an L-like configuration:
  $$ \xy
  (0,0)*{\includegraphics[scale=0.75]{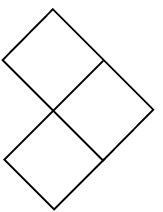}};
 \endxy \quad \mapsto \quad
 \xy
  (0,0)*{\includegraphics[scale=0.75]{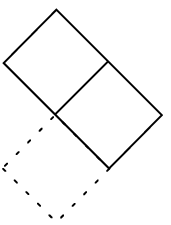}};
 \endxy \quad \mapsto \quad \xy
  (0,0)*{\includegraphics[scale=0.75]{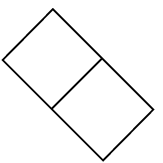}};
 \endxy  \qquad \text{or}  \qquad  \xy
  (0,0)*{\includegraphics[angle=180, scale=0.75]{anti2.eps}};
 \endxy \quad \mapsto \quad  \xy
  (0,0)*{\reflectbox{\includegraphics[scale=0.75]{anti3g.eps}}};
 \endxy \quad \mapsto \quad  \xy
  (0,0)*{\reflectbox{\includegraphics[scale=0.75]{anti3.eps}}};
 \endxy $$
After applying this move, beads slide freely up the abacus in antigravity.
\end{enumerate}

A configuration of beads stable under antigravity and the antigravity moves is called a {\em stable antigravity configuration}, or a {\em stable configuration}.


While square moves that do not involve the anchor are not directly reducible using the antigravity moves, for any such square configuration to exist in an antigravity configuration there must also be a square configuration that does involve the anchor.  After simplifying this anchor square move, an $L$-like configuration will be created.  Applying antigravity moves and iterating this process will then simplify the non-anchor square move.  It is easy to see that:
\begin{prop}
For type $A_{\infty}$ with vertex set $I$ identified with $\Z$, a stable antigravity configuration is any antigravity configuration with exactly one bead on the runner $i$ for each $i$ in an interval $[a,b]$ containing the anchored bead.
\end{prop}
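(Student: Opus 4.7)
The plan is to prove the biconditional characterization of stable antigravity configurations by treating the two directions separately, with the forward direction doing most of the work.

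For the reverse direction, I would take a configuration $C$ with exactly one bead on each runner $i \in [a,b]$ and the anchor on some runner $c \in [a,b]$, and verify it is stable. Each of the three antigravity moves involves at least two beads on a single runner (the two stacked beads for a stack move, the top and bottom of the square's vertical diagonal for a square move, and the vertical leg of the $L$ for an $L$-move), so single occupancy rules all of them out. It remains to check that antigravity itself removes nothing, which I would do by induction outward from the anchor: the unique bead on runner $c+1$ slides up under antigravity until it rests against the anchor; the bead on runner $c+2$ is then trapped by the now-pinned bead on runner $c+1$; continue inductively and symmetrically on both sides of $c$. Hence $C$ is stable.

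For the forward direction, suppose $C$ is a stable antigravity configuration. I would establish two claims. \textbf{Claim 1}: every runner carries at most one bead. Suppose runner $i$ supports at least two beads, and let $B_1, B_2$ be the lowest pair on runner $i$, at heights $h_1 < h_2$. Because $C$ is in antigravity equilibrium, $B_1$ is blocked above either by $B_2$ itself (when $h_2$ is immediately above $h_1$) or by a bead on runner $i \pm 1$ at an intermediate height. In the first case, the stack move applies when no bead on an adjacent runner is interposed, and otherwise an $L$-configuration with $B_1$ as its lowest box is formed and the $L$-move applies. In the second case the blocker, together with $B_1$ and a bead above, forms either another $L$ or a square configuration which, by the authors' preceding remark, reduces through an anchor square move, antigravity, and an $L$-move. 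In every case some move applies, contradicting stability. \textbf{Claim 2}: the set of runners carrying a bead is a consecutive interval containing the anchor's runner. The anchor itself occupies a runner, so that runner lies in the set. If the set were not consecutive, there would be an empty runner separating the others into two groups; in antigravity a bead is held in place only by direct contact with the anchor or with another trapped bead on an adjacent runner, so the group not containing the anchor could not be sustained and would be pulled off, contradicting stability. Combining the two claims gives the desired characterization.

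I expect the main obstacle to be Claim 1: the interaction of the three moves when many beads occupy a single runner requires a careful case analysis, most naturally an induction on the pair $(h_2 - h_1,\ \text{number of interposed adjacent-runner beads})$, to guarantee that some reducing move applies at each step. The authors' remark immediately before the proposition---that any non-anchor square configuration forces an anchor square configuration whose reduction creates an $L$-configuration---supplies the iterative scheme for this induction and is the key tool one must leverage to close the argument.
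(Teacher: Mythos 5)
You should first note that the paper offers no formal proof of this proposition at all: it is prefaced by ``It is easy to see that,'' with only the preceding remark about non-anchor square configurations as justification. So your attempt to supply a complete argument is welcome. Your reverse direction and your Claim 2 are fine, and your observation that every antigravity move requires two beads on a single runner is the right reduction. The gap is in Claim 1, in the case you label ``the second case.'' Suppose the lowest bead $B_1$ on a doubly-occupied runner $i$ sits at height $h_1$ and is held in place only by a bead on runner $i\pm 1$ at height $h_1+1$, with \emph{no} bead at $(i,h_1+2)$ (i.e.\ $h_2\ge h_1+4$). Then there is no $L$, square, or stack configuration containing $B_1$ at all: every one of the three moves needs two same-runner beads at vertical distance exactly $2$, and the ``bead above'' you invoke to complete the $L$ or square is precisely the bead whose existence is in question. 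The paper's remark about anchor squares cannot rescue this, since no square is present. So your concluding dichotomy ``in every case some move applies, contradicting stability'' fails here.

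The case is still impossible, but for a different reason, and closing it requires an argument you have not supplied. If one chooses the same-runner pair of minimal vertical gap over all runners and assumes that gap is at least $4$, then the blocker of $B_1$ at $(i\pm 1,h_1+1)$ cannot itself be blocked vertically (that would create a gap-$2$ pair, contradicting minimality), so it is blocked diagonally, and iterating forces a rigid staircase $(i\pm 1,h_1+1),(i\pm 2,h_1+2),\dots$ that must terminate at the anchor, say at $(i\pm k,h_1+k)$. But then the second bead $B_2$ at $(i,h_2)$ with $h_2\ge h_1+4$ cannot be trapped: along any chain of blockers the height increases by at least $1$ per unit of horizontal displacement, so no increasing path from $(i,h_2)$ can reach an anchor at height $h_1+k$. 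Hence the configuration was not antigravity-stable in the first place --- the contradiction comes from antigravity sweeping $B_2$ away, not from an applicable move. Your proposed induction on $(h_2-h_1,\ \text{number of interposed beads})$ does not engage with this scenario, because the obstruction is global (the position of the anchor) rather than a local reducible pattern. With this case repaired, the rest of your outline goes through.
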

Several examples are shown below where the anchored bead is shaded:
\[
\xy
  (0,0)*{\includegraphics[scale=0.7]{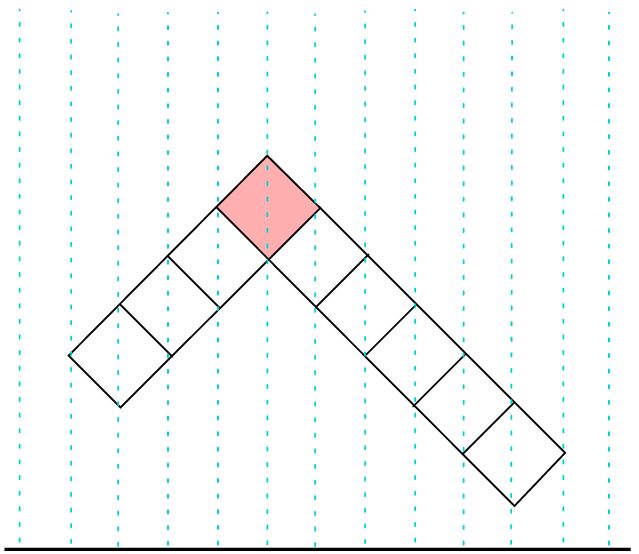}};
  (-16,-22)*{\scs -2}; (-11.5,-22)*{\scs -1};(-7,-22)*{\scs 0}; (-4,-22)*{\scs 1};
  (0,-22)*{\scs 2};(3.5,-22)*{\scs 3};(6.5,-22)*{\scs 4};(10,-22)*{\scs 5}; (13.5,-22)*{\scs 6}; (17.5,-22)*{\scs 7};
 \endxy
 \qquad
 \xy
  (0,0)*{\includegraphics[scale=0.7]{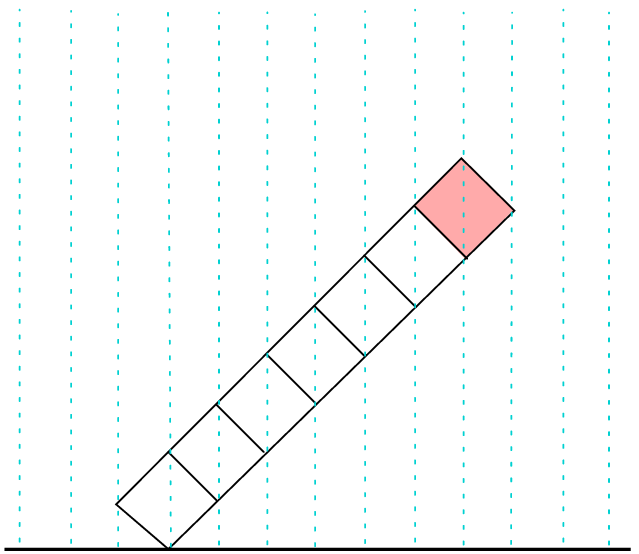}};
  (-16,-22)*{\scs -5}; (-12,-22)*{\scs -4};(-8,-22)*{\scs -3}; (-4,-22)*{\scs -2};
  (0,-22)*{\scs -1};(3.5,-22)*{\scs 0};(6.5,-22)*{\scs 1};(10,-22)*{\scs 2}; (13.5,-22)*{\scs 3}; (17.5,-22)*{\scs 4};
 \endxy
 \qquad
 \xy
  (0,0)*{\includegraphics[scale=0.7]{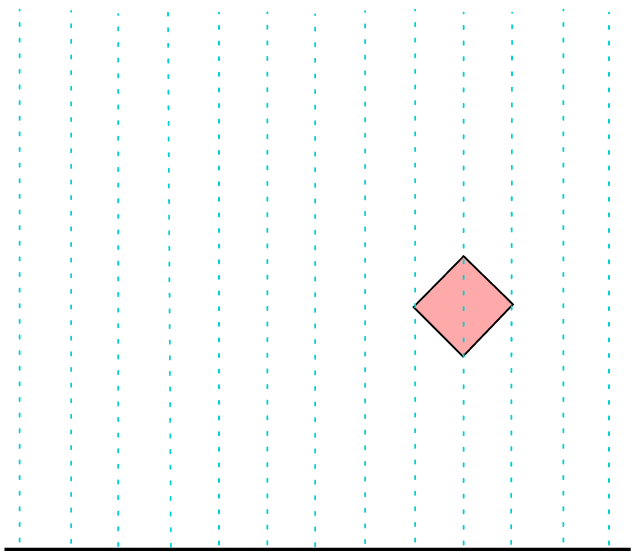}};
  (-16,-22)*{\scs -4}; (-12,-22)*{\scs -3};(-8,-22)*{\scs -2}; (-4,-22)*{\scs -1};
  (0,-22)*{\scs 0};(3.5,-22)*{\scs 1};(6.5,-22)*{\scs 2};(10,-22)*{\scs 3}; (13.5,-22)*{\scs 4}; (17.5,-22)*{\scs 5};\endxy
\]
The antigravity configuration in the first example is supported on $[-2,6]$, the second on $[-4,2]$, and the last configuration is supported on just a single vertex $[3]$.

\begin{defn}
Given a sequence $\ii=i_1 \dots i_r \dots i_m$,  anchor the bead corresponding to $i_r$ in $\conf(\ii)$.  Apply antigravity moves to the resulting configuration until the diagram stabilizes. From the beads that remain we form the {\em $r$-stable antigravity configuration $a_r(\ii)$ of $\ii$}, or $r$-stable configuration of $\ii$.
\end{defn}

It is easy to see that ${\rm Supp}(a_r(\ii))$ is completely determined by the support of the antigravity configuration of $\ii$ with anchor $i_r$, since after turning on antigravity all antigravity moves preserve the support of the configuration. Thus, the antigravity moves simply remove beads until there is exactly one bead on each runner in the support, so that $a_r(\ii)$ is well defined and independent of the order in which antigravity moves are applied.

A sequence $\ii$ is called {\em $r$-stable} if the configuration of $\ii$ in antigravity with anchored bead $i_r$ is the same as $a_r(\ii)$.

\begin{defn}
Let $\Lambda=\sum_{i \in I} \lambda_i \cdot i \in \N[I]$ and $\ii = i_1 \dots i_m$. For any $1 \leq r \leq m$ define the {\em $r$-antigravity bound of $\ii$} as
\[
 b_r=b_r(\ii) := \sum_{j \in {\rm Supp}(a_r(\ii))} \lambda_j.
\]
\end{defn}

\begin{example}
For the sequence $\ii=(0,1,-3,-4,-1,2,5,2,1,0,-2,2,-1)$, we compute the $13$-stable antigravity configuration $a_{13}(\ii)$ at $i_{13}=-1$ as follows:
\[\qquad\qquad\quad
 \xy
  (0,0)*{\includegraphics[scale=0.8]{sample-config.eps}};
   (-4,-18)*{1}; (16,-18)*{7}; (-16,-18)*{3};
    (-20,-14)*{4}; (0,-14)*{2}; (-8,-14)*{5};
     (4,-10)*{6}; (-12,-10)*{11};  (4,-2)*{8};  (0,2)*{9};  (4,6)*{12}; (-4,6)*{10};
  (-8,10)*{13};
  (-21,-25)*{\scs -4};(-17,-25)*{\scs -3};(-13,-25)*{\scs -2}; (-9,-25)*{\scs -1};(-4,-25)*{\scs 0};(0,-25)*{\scs 1};(4,-25)*{\scs 2};(8,-25)*{\scs 3}; (12,-25)*{\scs 4}; (16,-25)*{\scs 5};(20,-25)*{\scs 6};
 \endxy
 \quad \xy {\ar^{\txt{antigravity}} (-9,0)*{}; (9,0)*{}}\endxy
\quad  \xy
  (0,0)*{\includegraphics[scale=0.8]{sample-grconfig.eps}};
   (-4,-18)*{1}; (-16,2)*{3};
    (0,-14)*{2}; (-8,2)*{5};
     (4,-10)*{6}; (-12,6)*{11};  (4,-2)*{8};  (0,2)*{9};  (-4,6)*{10};
  (-8,10)*{\mathbf{13}};
    (-21,-25)*{\scs -4};(-17,-25)*{\scs -3};(-13,-25)*{\scs -2}; (-9,-25)*{\scs -1};(-4,-25)*{\scs 0};(0,-25)*{\scs 1};(4,-25)*{\scs 2};(8,-25)*{\scs 3}; (12,-25)*{\scs 4}; (16,-25)*{\scs 5};(20,-25)*{\scs 6};
 \endxy
\]
\[
 \quad \xy {\ar^{\xy
  (0,0)*{\includegraphics[scale=0.5]{square-move.eps}};
  (0,-2.5)*{\scs 5};  (0,2.5)*{\scs 13};(-2.5,0)*{\scs 11};(2.5,0)*{\scs 10}
 \endxy} (-9,0)*{}; (9,0)*{}}\endxy
  \xy
  (0,0)*{\includegraphics[scale=0.8]{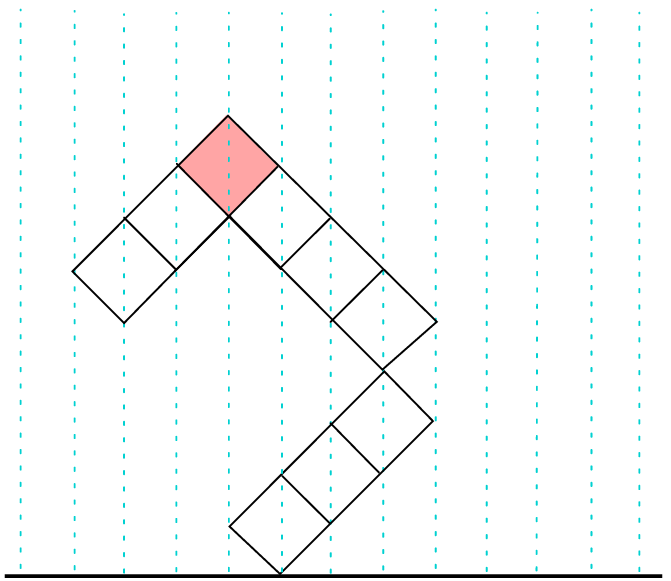}};
   (-4.5,-19)*{1}; (-17,2)*{3};
    (0,-14.5)*{2}; (4,-10.5)*{6};
    (-13,6.3)*{11}; (4,-2.5)*{8};
    (0,2)*{9};  (-4.5,6.3)*{10};
  (-8.7,10.5)*{\mathbf{13}};
    (-22,-25)*{\scs -4};(-18,-25)*{\scs -3};(-14,-25)*{\scs -2}; (-9,-25)*{\scs -1};(-4,-25)*{\scs 0};(0,-25)*{\scs 1};(4,-25)*{\scs 2};(8,-25)*{\scs 3}; (12,-25)*{\scs 4}; (16,-25)*{\scs 5};(20,-25)*{\scs 6};
 \endxy
 \quad \xy {\ar^{\xy
  (0,0)*{\includegraphics[scale=0.5]{anti1.eps}}; (0,-2.5)*{\scs 6};
  (0,2.5)*{\scs 8};
 \endxy}_{\txt{antigravity}} (-9,0)*{}; (9,0)*{}}\endxy
   \xy
  (0,0)*{\includegraphics[scale=0.8]{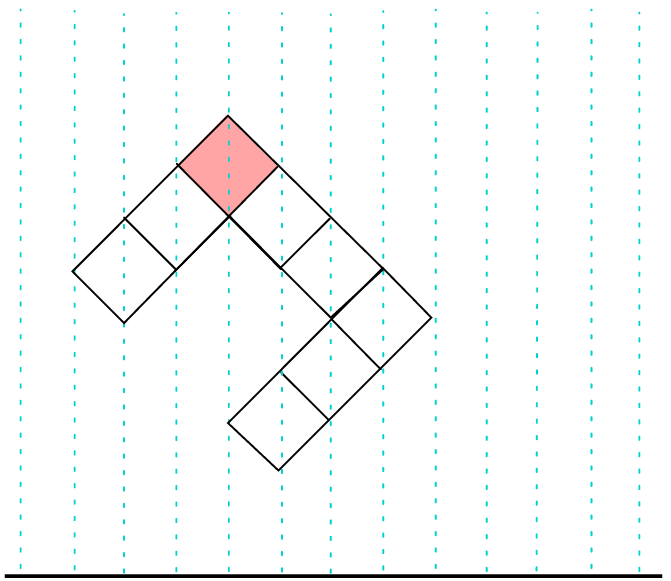}};
    (-4.5,-10.5)*{1}; (-17,2)*{3};
    (0,-6.2)*{2};
    (-13,6.3)*{11}; (4,-2.2)*{6};
    (0,2)*{9};  (-4.5,6.3)*{10};
  (-8.7,10.5)*{\mathbf{13}};
    (-22,-25)*{\scs -4};(-18,-25)*{\scs -3};(-14,-25)*{\scs -2}; (-9,-25)*{\scs -1};(-4,-25)*{\scs 0};(0,-25)*{\scs 1};(4,-25)*{\scs 2};(8,-25)*{\scs 3}; (12,-25)*{\scs 4}; (16,-25)*{\scs 5};(20,-25)*{\scs 6};
 \endxy
 \]
\[
 \quad \xy {\ar_{\txt{antigravity}}^{\xy
  (0,0)*{\includegraphics[scale=0.5]{L.eps}};
  (-1.5,-2.7)*{\scs 2}; (-1.5,2.5)*{\scs 9};
  (1.4,0)*{\scs 6}; \endxy}(-9,0)*{}; (9,0)*{}}\endxy
   \xy
  (0,0)*{\includegraphics[scale=0.8]{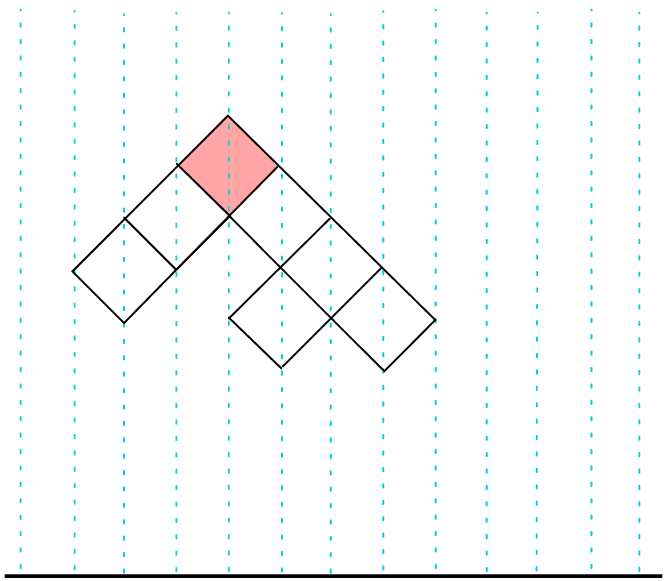}};
   (-4.5,-2.2)*{1}; (-17,2)*{3};
    (-13,6.3)*{11}; (4,-2.2)*{6};
    (0,2)*{9};  (-4.5,6.3)*{10};
  (-8.7,10.5)*{\mathbf{13}};
    (-22,-25)*{\scs -4};(-18,-25)*{\scs -3};(-14,-25)*{\scs -2}; (-9,-25)*{\scs -1};(-4,-25)*{\scs 0};(0,-25)*{\scs 1};(4,-25)*{\scs 2};(8,-25)*{\scs 3}; (12,-25)*{\scs 4}; (16,-25)*{\scs 5};(20,-25)*{\scs 6};
 \endxy
 \quad \xy {\ar^{\xy
  (0,0)*{\includegraphics[scale=0.5]{L.eps}};
  (-1.5,-2.7)*{\scs 1}; (-1.5,2.5)*{\scs 10};(1.4,0)*{\scs 9};\endxy}(-9,0)*{}; (9,0)*{}}\endxy
   \xy
  (0,0)*{\includegraphics[scale=0.8]{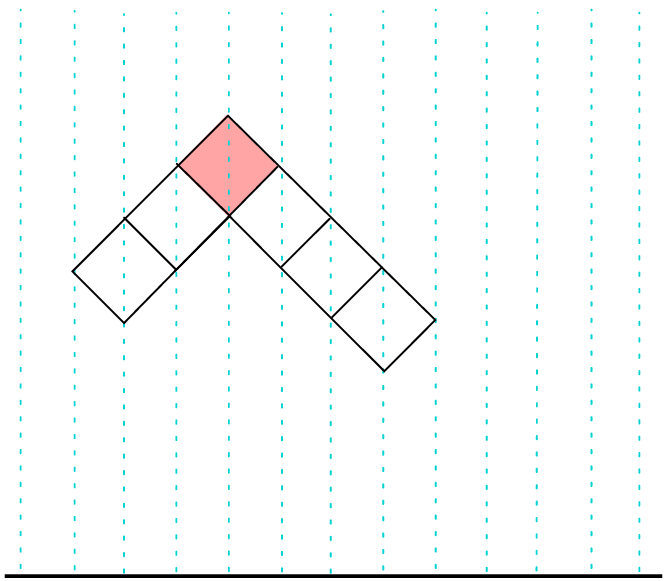}};
(-17,2)*{3};
    (-13,6.3)*{11}; (4,-2.2)*{6};
    (0,2)*{9};  (-4.5,6.3)*{10};
  (-8.7,10.5)*{\mathbf{13}};
    (-22,-25)*{\scs -4};(-18,-25)*{\scs -3};(-14,-25)*{\scs -2}; (-9,-25)*{\scs -1};(-4,-25)*{\scs 0};(0,-25)*{\scs 1};(4,-25)*{\scs 2};(8,-25)*{\scs 3}; (12,-25)*{\scs 4}; (16,-25)*{\scs 5};(20,-25)*{\scs 6};
 \endxy
 \]
The $13$-stable configuration for $\ii$ has ${\rm Supp}(a_{13}(\ii))=\{-3,-2,-1,0,1,2\}$.  In this example we could have also performed the `stack move' on boxes labelled `6' and `8', then applied several other antigravity moves.  The end result is the same.  All beads below the highest bead on each runner in ${\rm Supp}(a_{13}(\ii))$ are removed by the antigravity moves.  The 13-antigravity bound of $\ii$ is $b_{13} =  \sum_{j=-3}^2\lambda_{j}$.
\end{example}

\begin{prop} \label{prop_subset}
For $\ii \in \seq(\nu)$ let $\ii'$ denote the subsequence obtained from $\ii$ by removing those terms corresponding to beads that are pulled off the bead and runner diagram in antigravity with anchor $i_m$.  If $\jj$ is any subsequence of $\ii'$ and $i_r \in \jj$, then $a_r(\jj) \subset a_m(\ii)$ and in particular $b_r(\jj) \leq b_m(\ii)$.
\end{prop}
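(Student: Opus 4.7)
The plan is to exploit the fact that the antigravity process is monotone on supports: both the removal of untrapped beads and the subsequent antigravity moves can only shrink the set of occupied runners. First I would unpack the hypothesis to identify the configuration of $\ii'$ with the antigravity configuration of $\ii$ anchored at $i_m$ \emph{prior} to applying any antigravity moves, since by definition $\ii'$ retains exactly those terms of $\ii$ whose beads survive the pull of antigravity. The text has already observed that the antigravity moves preserve support, and that $a_m(\ii)$ has exactly one bead on each runner of this support. Consequently ${\rm Supp}(\ii')={\rm Supp}(a_m(\ii))$.

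Next, for any subsequence $\jj$ of $\ii'$ the configuration of $\jj$ is obtained by deleting beads from that of $\ii'$, so
\[
{\rm Supp}(\jj)\subseteq{\rm Supp}(\ii')={\rm Supp}(a_m(\ii)).
\]
To compute $a_r(\jj)$ one switches on antigravity with the bead $i_r$ anchored (which merely pulls untrapped beads off the abacus) and then applies antigravity moves (each of which, by inspection of the square, stack, and $L$-moves, only deletes a bead from a runner that still retains other beads). Both operations can only shrink the support, so ${\rm Supp}(a_r(\jj))\subseteq{\rm Supp}(\jj)$. Chaining the inclusions gives ${\rm Supp}(a_r(\jj))\subseteq{\rm Supp}(a_m(\ii))$, which is the desired containment $a_r(\jj)\subset a_m(\ii)$, since a stable antigravity configuration is determined by its support (one bead per runner on an interval, by the previous proposition).

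The inequality $b_r(\jj)\le b_m(\ii)$ is then immediate from the definition: because each weight $\lambda_j$ is a nonnegative integer,
\[
b_r(\jj)=\sum_{j\in{\rm Supp}(a_r(\jj))}\lambda_j\;\le\;\sum_{j\in{\rm Supp}(a_m(\ii))}\lambda_j=b_m(\ii).
\]
The only step carrying real content is the identification ${\rm Supp}(\ii')={\rm Supp}(a_m(\ii))$, and this is just a repackaging of the already-noted fact that antigravity moves preserve support. There is no genuine obstacle here; everything else is set-theoretic monotonicity of a manifestly bead-deleting process.
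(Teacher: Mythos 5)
Your argument is correct and follows essentially the same route as the paper's: identify ${\rm Supp}(a_m(\ii))$ with ${\rm Supp}(\conf(\ii'))$, observe that passing to a subsequence and then applying antigravity with anchor $i_r$ can only shrink the support, and conclude by the fact that a stable configuration is determined by its support. Your version merely spells out the support-monotonicity of the individual antigravity moves a bit more explicitly than the paper does; no gap.
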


\begin{proof}
Recall that $a_m(\ii)$ is determined by the support of $\conf(\ii)$ in antigravity with anchor $i_m$.  That is, ${\rm Supp}(a_m(\ii))={\rm Supp}(\conf(\ii'))$, where $\ii'$ is as above.  If $\jj$ is any subsequence of $\ii'$ and $i_r \in \jj$, then the support of the configuration $\conf(\jj)$ considered in antigravity with anchor $i_r$ must be contained in ${\rm Supp}(\conf(\ii))$.  Hence, $a_r(\jj) \subset a_m(\ii)$ and the result follows.
\end{proof}

\begin{rem} \hfill
\begin{itemize}
 \item
The antigravity bound only depends on the shape of a configuration $\lambda$, not on the entries that appear in a given $\lambda$-tableau.  In particular, if $\conf(\ii)=\conf(\jj)$ for some sequences $\ii$ and $\jj$, then by Kleshchev and Ram's characterization of configurations (see Proposition~\ref{prop_KR}) we must have $\jj=s(\ii)$ for some permutation $s=s_{j_1}\dots s_{j_k}$ with each $s_{j_a}$ an admissible transposition. It is clear that the $r$-stable configuration and $r$-antigravity bound for $\ii$ are the same as the $s(r)$-stable configuration and $s(r)$-antigravity bound for $\jj$.

 \item The $r$-stable antigravity sequence for $\ii = i_1 \dots i_r \dots i_{m}$ does not depend on the terms of $\ii$ that occur after $i_r$.  In particular, all beads corresponding to terms in the subsequence $i_{r+1}\dots i_{m}$ are removed from the diagram when antigravity is turned on.  Hence, if $\ii'=i_1 \dots i_r$, then the $r$-stable antigravity configurations for these two sequences $\ii$ and $\ii'$ are the same, $a_r(\ii)=a_r(\ii')$.
\end{itemize}
\end{rem}

%
\subsection{Local relations for cyclotomic quotients}
%

The relations in $R(\nu)$ for identically coloured strands imply
\begin{equation} \label{eq_crossdouble}
 \xy   (0,0)*{\twocross{i}{i}}; (-2,2)*{\bullet};\endxy \quad = \quad
 \xy (0,0)*{\dcross{i}{i}};  \endxy
 \qquad \qquad
 \xy   (0,0)*{\twocross{i}{i}}; (2,2)*{\bullet};\endxy  \quad = \quad
 - \;\xy (0,0)*{\dcross{i}{i}};  \endxy
\end{equation}
and for $b>0$
\begin{eqnarray}
 \xy  (0,0)*{\dcrossul{i}{i}};(-5,3)*{\scs b}; \endxy \;\; - \;\;\xy (0,0)*{\dcrossdr{i}{i}}; (5,0)*{\scs b}; \endxy
 \;\;  = \;\;  \xy  (0,0)*{\dcrossdl{i}{i}};(-5,0)*{\scs b}; \endxy
 \;\;-\;\; \xy (0,0)*{\dcrossur{i}{i}}; (5,2)*{\scs b}; \endxy
 \;\;  = \;\;
    \sum_{\ell_1+\ell_2=b-1} \xy (-3,0)*{\supdot{i}};
    (3,0)*{\supdot{i}}; (7,2)*{\scs \ell_2};(-7,2)*{\scs \ell_1}; \endxy
 \label{eq_inddotslide}
\end{eqnarray}
Recall that $i^s := i i \dots i$ where vertex $i$ appears $s$ times.

\begin{prop} \label{prop_symcomb}
Let $\ii = \ii' i^{s} \ii'' \in \seq(\nu)$, $s\geq 1$,  with $|\ii'|=r$. If
$x_{r+1,\ii}^a=0$, then
\begin{equation}
  \sum_{\ell_1 +  \dots + \ell_{s} = a-(s-1)} x_{r+1,\ii}^{\ell_1}\cdot
  x_{r+2,\ii}^{\ell_2}\cdot
  \ldots \cdot x_{r+s,\ii}^{\ell_s} \;\; = \;\; 0.
\end{equation}
\end{prop}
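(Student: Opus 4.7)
The plan is to prove by induction on $k$ the stronger statement that, writing $y_j := x_{r+j,\ii}$ and $h_n(y_1,\ldots,y_k) := \sum_{\ell_1+\cdots+\ell_k=n} y_1^{\ell_1}\cdots y_k^{\ell_k}$, one has $h_n(y_1,\ldots,y_k)=0$ in $R_\nu^\Lambda$ for every $n \ge a-k+1$ and every $1\le k\le s$. The proposition is the case $k=s$, $n=a-s+1$. The base case $k=1$ is just the hypothesis $y_1^n=0$ for $n\ge a$.

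For the inductive step, let $\sigma_k:=\delta_{r+k,\ii}$. Since the $(r+k)$-th and $(r+k+1)$-th strands are both coloured $i$, relation \eqref{eq_UUzero} gives $\sigma_k^2=0$, and relations \eqref{eq_iislide1}, \eqref{eq_iislide2} and \eqref{eq_inddotslide} (extended from monomials to arbitrary polynomials by linearity together with the twisted Leibniz rule $\partial_k(fg)=(\partial_k f)g+(s_k f)\partial_k g$) yield the familiar nilHecke identity $\sigma_k f = (s_k f)\sigma_k + \partial_k f$ for every polynomial $f\in\Z[y_1,\ldots,y_s]$, where $s_k$ swaps $y_k$ and $y_{k+1}$ and $\partial_k$ is the corresponding divided difference.

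Fix $n\ge a-k$ and apply this identity with $f=h_{n+1}(y_1,\ldots,y_k)$. The standard symmetric-function identity $\partial_k\bigl(h_{n+1}(y_1,\ldots,y_k)\bigr)=h_n(y_1,\ldots,y_{k+1})$ gives
\[
\sigma_k\,h_{n+1}(y_1,\ldots,y_k) \;=\; h_{n+1}(y_1,\ldots,y_{k-1},y_{k+1})\,\sigma_k + h_n(y_1,\ldots,y_{k+1}).
\]
The left-hand side vanishes by the inductive hypothesis (since $n+1\ge a-k+1$). Using in addition the identity $h_{n+1}(y_1,\ldots,y_k)-h_{n+1}(y_1,\ldots,y_{k-1},y_{k+1})=(y_k-y_{k+1})\,h_n(y_1,\ldots,y_{k+1})$ together with the same vanishing, the first summand on the right simplifies in the quotient to $(y_{k+1}-y_k)h_n(y_1,\ldots,y_{k+1})\sigma_k$, so the equation becomes
\[
(y_{k+1}-y_k)\,h_n(y_1,\ldots,y_{k+1})\,\sigma_k + h_n(y_1,\ldots,y_{k+1}) \;=\; 0.
\]
Multiplying on the right by $\sigma_k$ and invoking $\sigma_k^2=0$ yields $h_n(y_1,\ldots,y_{k+1})\,\sigma_k=0$; substituting this back produces $h_n(y_1,\ldots,y_{k+1})=0$, completing the induction.

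The main obstacle is recognising that the induction has to be carried on the stronger statement covering all $n\ge a-k+1$ at once, rather than just the critical value $n=a-k+1$. Only with this extra flexibility does the polynomial identity allow us to replace the stray term $h_{n+1}(y_1,\ldots,y_{k-1},y_{k+1})$ by a multiple of $h_n(y_1,\ldots,y_{k+1})$, which is precisely what sets up the $\sigma_k^2=0$ trick that peels off the unwanted $\sigma_k$ factor and leaves $h_n(y_1,\ldots,y_{k+1})$ isolated.
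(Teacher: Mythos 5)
Your proof is correct and is essentially the paper's argument: both proceed by induction on the length of the $i$-coloured block, produce $h_n(y_1,\dots,y_{k+1})$ as the divided difference $\partial_k$ of the previous symmetric combination via the dot-crossing relations \eqref{eq_iislide2} and \eqref{eq_inddotslide}, and dispose of the leftover crossing term using $\sigma_k^2=0$ from \eqref{eq_UUzero}. Your divided-difference packaging (and the explicit strengthening of the inductive statement to all $n\ge a-k+1$, which the paper obtains implicitly by quantifying over all exponents $a'\ge a$) is a tidier algebraic rendering of the diagrammatic manipulation around \eqref{eq_aminusl}, but the underlying mechanism is identical.
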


\begin{proof}
The proof is by induction on the length $s$ of consecutive strands labelled $i$.
The base case is trivial. Assume the result holds up to length $s$, we will show
it also holds for length $s+1$.  Working locally around the $s+1$ consecutive strands labelled $i$
\begin{equation}
  \sum_{ \xy (0,0)*{\scs \ell_1+ \dots +\ell_{s+1}}; (0,-3)*{\scs =a-s}; \endxy}
  \xy (-3,0)*{\supdot{i}};
    (3,0)*{\supdot{i}}; (0,2)*{\scs \ell_1};(6,2)*{\scs \ell_2};
    (12,0)*{\cdots};
    (18,0)*{\supdot{i}}; (21,2)*{\scs \ell_{s}};
    (24,0)*{\supdot{i}}; (29,2)*{\scs \ell_{s+1}};
    \endxy \quad \refequal{\eqref{eq_iislide2}} \quad
    \sum_{ \xy (0,0)*{\scs \ell_1+ \dots +\ell_{s+1}}; (0,-3)*{\scs =a-s}; \endxy}
    \left(
   \xy   (-3,0)*{\supdot{i}}; (0,2)*{\scs \ell_1};
    (4,0)*{\cdots};
    (12,0)*{\dcrossdl{i}{i}};
    (9.5,3.2)*{\bullet}; (14.5,3.2)*{\bullet};
    (7,3,2)*{\scs \ell_s};(19,3,2)*{\scs \ell_{s+1}};\endxy
    \;\; - \;\;
     \xy   (-3,0)*{\supdot{i}}; (0,2)*{\scs \ell_1};
    (4,0)*{\cdots};
    (12,0)*{\dcross{i}{i}};
    (9.5,3.2)*{\bullet}; (13.2,2.2)*{\bullet};
    (15.5,4.5)*{\bullet};
    (7,3,2)*{\scs \ell_s};(17.5,1.5)*{\scs \ell_{s+1}};\endxy \right) \label{eq_is}
\end{equation}
Fixing the value $a-\ell:=\ell_1+\ell_2+\dots+\ell_{s-1}$, there is a symmetric
combination of $\ell$ dots on the last two strands, so we can write
\begin{equation} \label{eq_aminusl}
\sum_{\ell_s+\ell_{s+1} = \ell} \xy
 (0,0)*{\dcrossul{i}{i}}; (2.5,3.2)*{\bullet};
    (7,3,2)*{\scs \ell_{s+1}}; (-6,3,2)*{\scs \ell_{s}};
\endxy
\;\; \refequal{\eqref{eq_inddotslide}} \;\;
   \xy   (0,0)*{\twocross{i}{i}}; (-2,2)*{\bullet};(-6,3)*{\scs \ell+1};\endxy
   -
   \xy   (0,0)*{\twocross{i}{i}}; (2,9)*{\bullet};(7,9)*{\scs \ell+1};\endxy
\;\; \refequal{\eqref{eq_UUzero}} \;\; \xy   (0,0)*{\twocross{i}{i}};
(-2,2)*{\bullet};(-6,3)*{\scs \ell+1};\endxy
\end{equation}
Then \eqref{eq_is} can be written as
\begin{equation}
  \qquad \refequal{\eqref{eq_aminusl}} \quad \sum_{\ell_1+ \dots +\ell_{s-1}=a-s} \left(
  \xy
    (-3,0)*{\updot{i}};(3,0)*{\updot{i}};
    (0,2)*{\scs \ell_1};(6,2)*{\scs \ell_2};
    (12,0)*{\cdots};
    (20,0)*{\twocross{i}{i}};
    (18,2)*{\bullet};(18,-6)*{\bullet};(14,3)*{\scs \ell+1};
    \endxy
    -
      \xy
    (-3,0)*{\updot{i}};(3,0)*{\updot{i}};
    (0,2)*{\scs \ell_1};(6,2)*{\scs \ell_2};
    (12,0)*{\cdots};
    (20,0)*{\twocross{i}{i}};
    (18,2)*{\bullet};(14,3)*{\scs \ell+1}; (22.3,9)*{\bullet};
    \endxy \right) \nn
\end{equation}
for $\ell=a-(\ell_1+\cdots + \ell_{s-1})$.  If we write $\ell'_s=\ell+1$ and add
terms for $\ell'_s=0$, which are zero by $\eqref{eq_UUzero}$, then
\begin{equation}
 \sum_{ \xy (0,0)*{\scs \ell_1+ \dots +\ell_{s+1}}; (0,-3)*{\scs =a-s}; \endxy}
  \xy (-3,0)*{\updot{i}};
    (3,0)*{\updot{i}}; (0,2)*{\scs \ell_1};(6,2)*{\scs \ell_2};
    (12,0)*{\cdots};
    (18,0)*{\updot{ i}}; (21,2)*{\scs \ell_{s}};
    (24,0)*{\updot{ i}}; (29,2)*{\scs \ell_{s+1}};
    \endxy \;\; = \;\;
   \sum_{ \xy (0,0)*{\scs \ell_1+ \dots +\ell_{s-1}+\ell'_s}; (0,-3)*{\scs =a-(s-1)}; \endxy}
   \left(
  \xy
    (-3,0)*{\updot{ i}};(3,0)*{\updot{\; i}};
    (0,2)*{\scs \ell_1};(6,2)*{\scs \ell_2};
    (12,0)*{\cdots};
    (20,0)*{\twocross{i}{i}};
    (18,2)*{\bullet};(18,-6)*{\bullet};(15.5,3)*{\scs \ell'_s};
    \endxy
    -
      \xy
    (-3,0)*{\updot{\; i}};(3,0)*{\updot{\; i}};
    (0,2)*{\scs \ell_1};(6,2)*{\scs \ell_2};
    (12,0)*{\cdots};
    (20,0)*{\twocross{i}{i}};
    (18,2)*{\bullet};(15.5,3)*{\scs \ell'_s}; (22.3,9)*{\bullet};
    \endxy \right) \nn
\end{equation}
and both terms on the right are zero by the induction hypothesis.
\end{proof}

The following Proposition appears in an algebraic form in the work of Brundan and Kleshchev~\cite{BK1}.

\begin{prop}\label{prop_anchorc}
Consider the sequence $\ii = i_1 i_2  \dots i_m \in \seq(\nu)$ in $R_{\nu}^{\Lambda}$. If  $i_{m-1}=i_{m}$, then $x_{m-1,\ii}^{b}=0$ implies $x_{m,\ii}^{b}=0$.
\end{prop}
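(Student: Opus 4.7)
The plan is to reduce the claim to a one-line polynomial identity after invoking Proposition~\ref{prop_symcomb}, which already packages the relevant combinatorial content.

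First, I would apply Proposition~\ref{prop_symcomb} to the two identically coloured strands sitting at positions $m-1$ and $m$ in $\ii$. Write $\ii = \ii' \cdot i^2 \cdot \ii''$ with $\ii' = i_1 i_2 \cdots i_{m-2}$ and $\ii'' = \emptyset$, so that $r := |\ii'| = m-2$ and $s = 2$. The hypothesis $x_{m-1,\ii}^b = 0$ is precisely the hypothesis $x_{r+1,\ii}^a = 0$ of Proposition~\ref{prop_symcomb} with $a = b$, so its conclusion yields
\[
 \sum_{\ell_1 + \ell_2 = b - 1} x_{m-1,\ii}^{\ell_1} \cdot x_{m,\ii}^{\ell_2} \;=\; 0
\]
in $R_{\nu}^{\Lambda}$.

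Second, I would use that $x_{m-1,\ii}$ and $x_{m,\ii}$ are dots on two distinct vertical strands, so they commute in $R_{\nu}^{\Lambda}$ by planar isotopy (no crossings are involved). Hence the classical polynomial identity
\[
 y^{b} - x^{b} \;=\; (y - x)\sum_{\ell_1 + \ell_2 = b-1} x^{\ell_1} y^{\ell_2}
\]
holds inside the commutative subalgebra generated by $x_{m-1,\ii}$ and $x_{m,\ii}$. Substituting $x = x_{m-1,\ii}$ and $y = x_{m,\ii}$, the right-hand side vanishes by the previous step, so $x_{m,\ii}^{b} - x_{m-1,\ii}^{b} = 0$. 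Combined with the hypothesis $x_{m-1,\ii}^{b} = 0$, this gives $x_{m,\ii}^{b} = 0$, as desired.

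There is essentially no obstacle: all the analytic work is hidden inside Proposition~\ref{prop_symcomb}, and once it is recognised that the resulting symmetric sum is exactly the complete homogeneous polynomial $h_{b-1}(x_{m-1,\ii}, x_{m,\ii})$, the Bezout-style factorisation $y^b - x^b = (y-x)h_{b-1}(x,y)$ finishes the proof in one line.
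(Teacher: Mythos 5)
Your proof is correct, and it takes a genuinely different route from the one in the paper. The paper argues diagrammatically and locally around the crossing $\delta_{m-1,\ii}$: from the hypothesis $x_{m-1,\ii}^{b}=0$ and \eqref{eq_inddotslide} it derives the dot-sliding identity \eqref{eq_crossbslide} for $a\ge b$ dots, combines it with \eqref{eq_crossdouble} and \eqref{eq_UUzero} to show that the double crossing carrying $b$ dots vanishes (\eqref{eq_crossbzero}), and then extracts $x_{m,\ii}^{b}=0$ from one further application of \eqref{eq_iislide2}. You instead invoke Proposition~\ref{prop_symcomb} with $s=2$, $\ii''=\emptyset$ and $a=b$ to obtain the vanishing of $\sum_{\ell_1+\ell_2=b-1}x_{m-1,\ii}^{\ell_1}x_{m,\ii}^{\ell_2}$, and finish with the factorisation $y^{b}-x^{b}=(y-x)\sum_{\ell_1+\ell_2=b-1}x^{\ell_1}y^{\ell_2}$, which is legitimate here because dots on distinct strands of $1_{\ii}$ commute by isotopy. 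There is no circularity: Proposition~\ref{prop_symcomb} precedes Proposition~\ref{prop_anchorc} and its proof is independent of it. What your approach buys is brevity and a purely algebraic finish once the symmetric-sum identity is granted (including the degenerate case $b=0$, where the hypothesis already forces $1_{\ii}=0$); what the paper's approach buys is a self-contained computation directly from the defining relations, without routing through the inductive Proposition~\ref{prop_symcomb}. Both arguments ultimately rest on the same local relations \eqref{eq_iislide1}--\eqref{eq_iislide2}.
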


\begin{proof}
We work locally around the two identically coloured strands.  Using that $b$ dots
on the $(m-1)$st strand is zero we have for any $a \geq b$
\begin{eqnarray} \nn
 \xy (0,0)*{\dcrossdr{\;\;i_m}{\;\;i_m}}; (5,0)*{\scs a}; \endxy
   &\refequal{\eqref{eq_inddotslide}}& \xy  (0,0)*{\dcrossul{\;\;i_m}{\;\;i_m}};(-5,3)*{\scs a}; \endxy
 \quad - \quad
    \sum_{\ell_1+\ell_2=a-1} \xy (-3,0)*{\supdot{\;\;i_m}};
    (3,0)*{\supdot{\;\;i_m}}; (7,2)*{\scs \ell_2};(-7,2)*{\scs \ell_1}; \endxy
 \quad  = \quad   0\;-\sum_{\ell_1+\ell_2=a-1}
 \xy (-3,0)*{\supdot{\;\;i_m}}; (3,0)*{\supdot{\;\;i_m}}; (7,2)*{\scs \ell_2};(-7,2)*{\scs \ell_1}; \endxy
 \\
  &=& \xy  (0,0)*{\dcrossdl{\;\;i_m}{\;\;i_m}};(-5,0)*{\scs a}; \endxy
  \quad - \quad \sum_{\ell_1+\ell_2=a-1}
     \xy (-3,0)*{\supdot{\;\;i_m}}; (3,0)*{\supdot{\;\;i_m}}; (7,2)*{\scs \ell_2};(-7,2)*{\scs \ell_1};
     \endxy
 \quad \refequal{\eqref{eq_inddotslide}}\quad \xy (0,0)*{\dcrossur{\;\;i_m}{\;\;i_m}}; (5,2)*{\scs a}; \endxy
 \label{eq_crossbslide}
\end{eqnarray}
which implies
\begin{equation} \label{eq_crossbzero}
  \xy (0,0)*{\dcrossur{\;\;i_m}{\;\;i_m}}; (5,2)*{\scs b}; \endxy \quad \refequal{\eqref{eq_crossdouble}} \quad
 - \;  \xy   (0,0)*{\twocross{\;\;i_m}{\;\;i_m}}; (5,8)*{\scs b}; (2,2)*{\bullet};(2,9)*{\bullet};\endxy
 \quad \refequal{\eqref{eq_crossbslide}} \quad
  - \;  \xy   (0,0)*{\twocross{\;\;i_m}{\;\;i_m}}; (7,3)*{\scs b+1}; (2,2)*{\bullet};\endxy
  \quad \refequal{\eqref{eq_crossbslide}} \quad
   - \;  \xy   (0,0)*{\twocross{\;\;i_m}{\;\;i_m}}; (7,8)*{\scs b+1}; (2,9)*{\bullet};\endxy
 \quad  \refequal{\eqref{eq_UUzero}} \quad0.
\end{equation}
The claim follows since
\begin{equation}
  \xy (-3,0)*{\sup{\; i_m}};
    (3,0)*{\supdot{\; i_m}}; (6,2)*{\scs b};\endxy \quad  \refequal{\eqref{eq_iislide2}} \quad
    \xy  (0,0)*{\dcrossdl{\;\;i_m}{\;\;i_m}};(3,3.7)*{\bullet};(6,3)*{\scs b}; \endxy \quad - \quad
    \xy (0,0)*{\dcrossur{\;\;i_m}{\;\;i_m}}; (7,3)*{\scs b+1}; \endxy \quad \refequal{\eqref{eq_crossbzero}} \quad 0.
\end{equation}
\end{proof}

%
\subsection{Factoring sequences}
%

Recall from \cite{KL} elements ${}_{\jj}1_{\ii}$ in $R(\nu)$.  They are represented by diagrams with the fewest number of crossings that connect the sequence $\ii$ to the sequence $\jj$.
For example,
\[
{_{jjiki}1_{ijkij}} \quad = \quad
 \xy
 (0,0)*{ \includegraphics[scale=0.4]{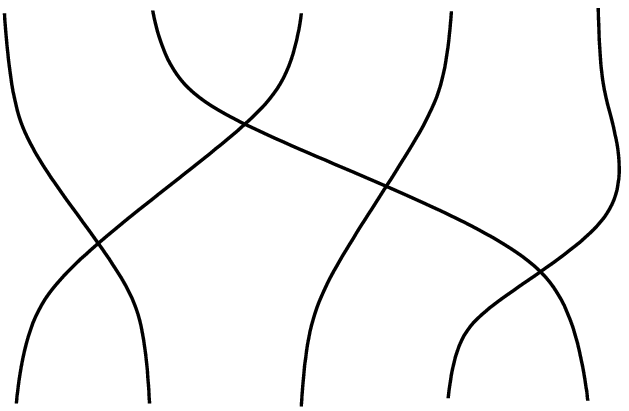}};
 (-12,-10)*{\scs i}; (-7,-10)*{\scs j};  (0,-10)*{\scs k}; (6,-10)*{\scs i}; (11.5,-10)*{\scs j};
 (-12.5,10)*{\scs j};  (-7,10)*{\scs j}; (0,10)*{\scs i}; (6,10)*{\scs k}; (12,10)*{\scs i};
 \endxy
\]
In particular, identically coloured strands do not intersect in ${}_{\jj}1_{\ii}$ and ${}_{\ii}1_{\ii}$ is just $1_{\ii}$.

Consider $\ii\in \seq(\nu)$ with $\ii = \ii' \ii''\ii'''$ and $\ii' \in \seq(\nu')$, $\ii'' \in \seq(\nu'')$,  $\ii''' \in \seq(\nu''')$, where $\nu = \nu'+\nu''+\nu'''$.  Write $R_{\ii',\ii'',\ii'''}$ for the image of $R(\nu')1_{\ii'} \otimes R(\nu'')1_{\ii''} \otimes R(\nu''')1_{\ii'''}$ in $R(\nu)$ under the natural inclusion
$
 R(\nu') \otimes R(\nu'') \otimes R(\nu''') \longrightarrow R(\nu).
$
We say that the sequence $\ii=\ii' i_r \ii'''$ has an {\em $r$-factorization} through the sequence $\jj$ if
\begin{equation}
 1_{\ii} = R_{\ii',i_r, \ii'''} \left( {}_{\ii}1_{\jj}\right)\left( {}_{\jj}1_{\ii}\right)R_{\ii',i_r, \ii'''}.
\end{equation}
More generally we say that $\ii$ has an $r$-factorization through a finite collection of sequences $\{\jj_a\}_a$, where some $\jj_a$ may be repeated, if
\[
 1_{\ii} = \sum_{a} \; R_{\ii',i_r, \ii'''} \left( {}_{\ii}1_{\jj_a}\right)\left( {}_{\jj_a}1_{\ii}\right)R_{\ii',i_r, \ii'''}.
\]

\begin{example}
The sequence $\ii$ has an $r$-factorization through sequence $s_r(\ii)$ for any admissible transposition $s_r$ since
\[
 \xy
    (-12,0)*{\up{i_1}};
    (-8,0)*{\cdots};
    (-3,0)*{\up{i_r}};
    (3,0)*{\up{i_{r+1}}};
    (8,0)*{\cdots};
    (12,0)*{\up{i_m}};
    \endxy
\;\; = \;\;
 \xy
    (-12,0)*{\up{i_1}};
    (-8,0)*{\cdots};
    (0,0)*{\twocross{i_r}{i_{r+1}}};
    (8,0)*{\cdots};
    (12,0)*{\up{i_m}};
    \endxy
\]
If $s$ is a permutation that can be written as a product of admissible permutations and $\jj=s(\ii)$, then $\ii$ has an $r$-factorization through $\jj$ since all crossings in ${}_{\jj}1_{\ii}$ and ${}_{\ii}1_{\jj}$ are coloured by disconnected vertices, so that $1_{\ii} = {}_{\ii}1_{\jj}{}_{\jj}1_{\ii}$.
\end{example}

\begin{example} \label{example_iij}
The sequence $iij$ has a $3$-factorization through the sequences $\{iji,iji\}$ when $i \cdot j =-1$. The factorization follows from \eqref{eq_iislide2} and \eqref{eq_crossdouble} since
\begin{equation}
 \xy
    (0,0)*{\up{i}};
    (6,0)*{\up{i}};
    (12,0)*{\up{j}};
    \endxy
\;\; = \;\;
-\;
  \xy
    (2,0)*{\twocross{i}{i}};
    (4,2)*{\bullet};(0,-6)*{\bullet};
    (12,0)*{\up{j}};
    \endxy
  \;\;  + \;\;
      \xy
    (2,0)*{\twocross{i}{i}};
    (4,2)*{\bullet};
    (4.3,9)*{\bullet};
    (12,0)*{\up{j}};
    \endxy
\end{equation}
Expanding both terms using \eqref{eq_UUzero} for $i \cdot j =-1$ gives
\begin{equation}\label{eq_examp_iij}
\qquad = \quad
\;\; -\;
 \vcenter{\xy
(-6,-13)*{};
 (6,8)*{};
 (0,0)*{\includegraphics[scale=0.53]{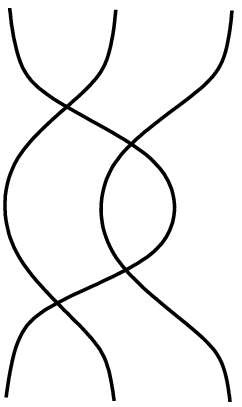}};
 (-5.5,-8)*{\bullet};
 (-7,-12)*{\scs i};(0,-12)*{\scs i};(5.5,-12)*{\scs j};
 \endxy}
\;\; + \;\;  \vcenter{\xy (-6,-13)*{};
 (6,8)*{};
 (0,0)*{\includegraphics[scale=0.53]{j2i.eps}};
 (-0.5,8)*{\bullet};
 (-7,-12)*{\scs i};(0,-12)*{\scs i};(5.5,-12)*{\scs j};
 \endxy}
 \;\; + \;\;
 \xy
    (2,0)*{\twocross{i}{i}};
    (12,2)*{\bullet};(0,-6)*{\bullet};
    (12,0)*{\up{j}};
    \endxy
  \;\;  - \;\;
      \xy
    (2,0)*{\twocross{i}{i}};
    (12,2)*{\bullet};
    (4.3,9)*{\bullet};
    (12,0)*{\up{j}};
    \endxy
\end{equation}
where the last two terms are zero by \eqref{eq_UUzero}.  Explicitly, the factorization is given by
\[
 1_{iij} =  \delta_{1,iij} \left({}_{iij}1_{iji}\right) \left({}_{iji}1_{iij} \right)\delta_{1,iij}x_{1,iij}
 + x_{2,iij}\delta_{1,iij}\left({}_{iij}1_{iji}\right) \left({}_{iji}1_{iij} \right)\delta_{1,iij}.
\]
\end{example}

The following somewhat complex example will be used in the proof of the main theorem.
\begin{example}\label{examp_complex}
Let $\ii = \ii' i_r i_{r+1} i_{r+2} i_{r+3} \dots i_m$ with $i_r=i_{r+2}$, $i_{r+1} \con i_r \con i_{r+3}$, $i_{r+1} \cdot i_{r+a} =0$ for $a\geq 3$, and $i_r \cdot i_{r+b}=0$ for $b\geq 4$.  Observe that
\begin{eqnarray} \label{eq_fff}
 \xy
(0,0)*{\up{i_1}};
 (5,0)*{\cdots};
 (10,0)*{\up{\;\;i_{r}}};
 (18,0)*{\up{\;\;i_{r+1}}};
 (26,0)*{\up{\;\;i_{r}}};
 (31,0)*{\cdots};
 (36,0)*{\up{\;\;i_{m}}};
 \endxy
& \refequal{\eqref{eq_r3_hard}} &
\xy (0,0)*{\up{i_1}};
 (5,0)*{\cdots};(18,0)*{\linecrossL{\;\;i_r}{\;\;i_{r+1}}{\;\;i_r}};  (31,0)*{\cdots};
 (36,0)*{\up{\;\;i_{m}}};\endxy
  \;\; -\;\;
\xy (0,0)*{\up{i_1}};
 (5,0)*{\cdots};(18,0)*{\linecrossR{\;\;i_r}{\;\;i_{r+1}}{\;\;i_r}};  (31,0)*{\cdots};
 (36,0)*{\up{\;\;i_{m}}};\endxy \nn \\
\end{eqnarray}
The first term on the right-hand-side can be rewritten as
\[
\refequal{\eqref{eq_crossdouble}} \quad -\;
\xy (0,0)*{\up{i_1}};
 (5,0)*{\cdots};(18,1.5)*{\includegraphics[angle=180,scale=0.5]{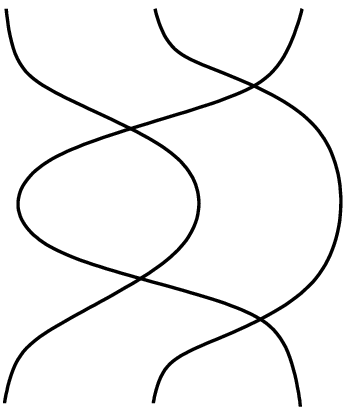}};
 (34,0)*{\up{\;\;i_{r+3}}};
(39,0)*{\cdots};
 (44,0)*{\up{\;\;i_{m}}}; (25.8,1.5)*{\bullet};
  (12,-10.5)*{\scs i_r};(19,-10.5)*{\scs i_{r+1}};(27,-10.5)*{\scs i_r}; \endxy
\quad \refequal{\eqref{eq_UUzero}} \quad -\;
 \xy
 (-20,0)*{\up{\;\;i_{1}}};
  (-15,0)*{\cdots};
 (6,2)*{\includegraphics[scale=0.5]{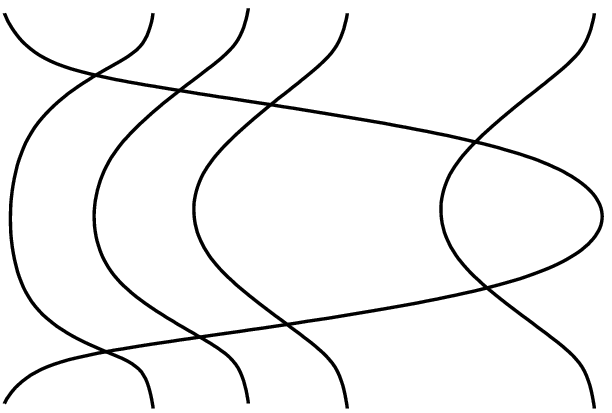}};
(7,0)*{ \cdots};
 (-9,-10.5)*{\scs i_{r}};(-2,-10.5)*{\scs i_{r+1}};(4,-10.5)*{\scs i_{r}};
 (9,-10.5)*{\scs i_{r+3}};(15,-10.5)*{\scs \cdots};
 (22,-10.5)*{\scs i_{m}};
\endxy
\]
and sliding the strand labelled $i_{r+1}$ right, the right-hand-side of \eqref{eq_fff} can be written as
\[
-
 \xy
 (-24,0)*{\up{\;i_{1}}};
  (-20,0)*{\cdots};
 (-2,2)*{\includegraphics[scale=0.45]{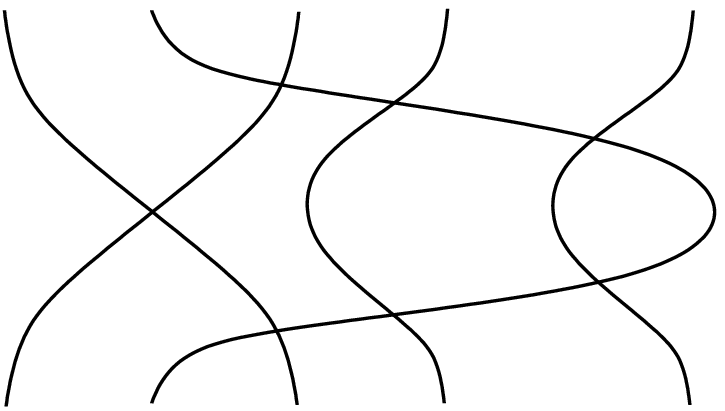}};
(2,2)*{ \cdots};
 (-18,-10.5)*{\scs i_{r}};(-11,-10.5)*{\scs i_{r+1}};(-4.5,-10.5)*{\scs i_{r}};
 (3,-10.5)*{\scs i_{r+3}};(9,-10.5)*{\scs \cdots};
 (13.5,-10.5)*{\scs i_{m}};
\endxy\;
\refequal{\eqref{eq_crossdouble}}
 \xy
 (-24,0)*{\up{\;i_{1}}};
  (-20,0)*{\cdots};
 (0,2)*{\includegraphics[scale=0.45]{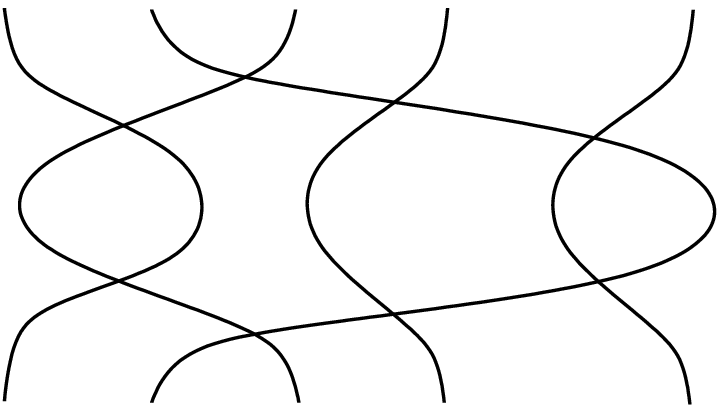}};
(4,2)*{ \cdots};
 (-16,-10.5)*{\scs i_{r}};(-10,-10.5)*{\scs i_{r+1}};(-2,-10.5)*{\scs i_{r}};
 (4.5,-10.5)*{\scs i_{r+3}};(10.5,-10.5)*{\scs \cdots};
 (15.5,-10.5)*{\scs i_{m}}; (-7.4,1.5)*{\bullet};
\endxy\;
\refequal{\eqref{eq_UUzero}}
 \xy
 (-24,0)*{\up{\;i_{1}}};
  (-20,0)*{\cdots};
 (0,2)*{\includegraphics[scale=0.45]{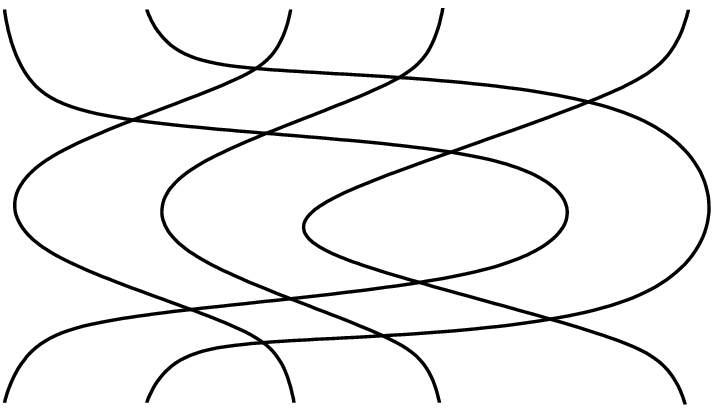}};
(4,1)*{ \cdots};
 (-16,-10.5)*{\scs i_{r}};(-9.5,-10.5)*{\scs i_{r+1}};(-3,-10.5)*{\scs i_{r}};
 (5,-10.5)*{\scs i_{r+3}};(10.5,-10.5)*{\scs \cdots};
 (15.5,-10.5)*{\scs i_{m}};
\endxy
\]
so that $\ii=\ii'i_ri_{r+1}i_r \ii''$ has an $m$-factorization through sequences $\{\jj,\kk\}$ where $\jj= \ii'i_{r+1}i_r \ii''i_r$ and $\kk=\ii'i_r \ii''i_ri_{r+1}$.
\end{example}

Our interest in $r$-factorizations is explained in the following Proposition:
\begin{prop} \label{prop_factorx}
Consider the sequence $\ii$ equipped with an $r$-factorization through sequences $\{\jj_a\}_a$ where $\jj_a=s_a(\ii)$ for permutations $s_a$ in $S_m$. If $x_{s_a(r),\jj}^{\alpha}=0$ for all $a$, then this implies $x_{r,\ii}^{\alpha}=0$.  Furthermore, when $s_a(r)<r$ for all $a$, it is enough to show $x_{s(r),\jj'_a}^{\alpha}=0$ for the truncated sequences $\jj'_a=j_1\dots j_{s_a(r)}$.
\end{prop}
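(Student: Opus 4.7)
The plan is to start from the $r$-factorization
$$1_\ii = \sum_a R_{\ii', i_r, \ii'''} \cdot ({}_\ii 1_{\jj_a}) \cdot ({}_{\jj_a} 1_\ii) \cdot R_{\ii', i_r, \ii'''}$$
and left-multiply by $x_{r,\ii}^\alpha$. The factor $R_{\ii',i_r,\ii'''}$ is built, by definition, from elements of $R(\nu')1_{\ii'}$ (living on the strands strictly to the left of position $r$), from dots on the $r$th strand (which form a commutative polynomial subring), and from elements of $R(\nu''')1_{\ii'''}$ (living on the strands strictly to the right of position $r$). All three types of generators commute with $x_{r,\ii}^\alpha$, so I would push the dot past the left copy of $R_{\ii',i_r,\ii'''}$ to act directly on ${}_\ii 1_{\jj_a}$.

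The diagram ${}_\ii 1_{\jj_a}$ is drawn with the fewest possible crossings, which in particular forces no two identically coloured strands to cross. Every crossing therefore involves strands of distinct colours, and relation \eqref{eq_ijslide} lets a dot slide through any such crossing with no correction term. Tracking the $i_r$-coloured strand from its endpoint at position $r$ on top of ${}_\ii 1_{\jj_a}$ down to its endpoint at position $s_a(r)$ on the bottom (where $\jj_a = s_a(\ii)$) and iterating the slide $\alpha$ times gives
$$x_{r,\ii}^\alpha \cdot ({}_\ii 1_{\jj_a}) = ({}_\ii 1_{\jj_a}) \cdot x_{s_a(r),\jj_a}^\alpha,$$
which vanishes by hypothesis. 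Summing over $a$ establishes the first assertion.

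For the refinement, assume $s_a(r) < r$ for every $a$, and let $\nu''$ denote the weight of the truncated sequence $\jj'_a = j_1 \cdots j_{s_a(r)}$. I would use the (non-unital) algebra map $\phi \colon R(\nu'') \to R(\nu)$ defined on diagrams by placing $1_{j_{s_a(r)+1}\cdots j_m}$ to the right; this map sends $x_{s_a(r),\jj'_a}^\alpha$ to $x_{s_a(r),\jj_a}^\alpha$, and it carries each cyclotomic generator $x_{1,\kk}^{\lambda_{k_1}}$ of $\cal{J}_\Lambda \subset R(\nu'')$ to the cyclotomic generator $x_{1,\kk \cdot j_{s_a(r)+1}\cdots j_m}^{\lambda_{k_1}}$ of $\cal{J}_\Lambda \subset R(\nu)$. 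Consequently $\phi$ descends to cyclotomic quotients, so vanishing of $x_{s_a(r),\jj'_a}^\alpha$ in $R^\Lambda_{\nu''}$ forces vanishing of $x_{s_a(r),\jj_a}^\alpha$ in $R^\Lambda_{\nu}$, reducing the refinement to the first part. The main obstacle is pedantic rather than deep: one must keep the permutation convention consistent so that sliding the dot down the $i_r$-coloured strand in ${}_\ii 1_{\jj_a}$ produces exactly the dot $x_{s_a(r),\jj_a}^\alpha$ appearing in the hypothesis.
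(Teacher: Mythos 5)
Your argument is correct and follows essentially the same route as the paper: commute $x_{r,\ii}^{\alpha}$ past $R_{\ii',i_r,\ii'''}$, slide the dots through the distinctly-coloured crossings of the minimal-crossing diagram ${}_{\ii}1_{\jj_a}$ using \eqref{eq_ijslide}, and observe that the truncated statement implies the full one because appending strands on the right preserves the cyclotomic ideal. Your treatment of the truncation step is merely a more explicit version of the paper's one-line remark.
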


\begin{proof}
We prove the case when $\ii$ has an $r$-factorization through $\jj = s(\ii)$ for some permutation $s$.  The general case is a straight forward extension of this case. Using the $r$-factorization and the fact the $x_{r,\ii}$ commutes with elements in $R_{\ii',i_r, \ii''}$, we can write
\[
x_{r,\ii}^{\alpha} \;\; = \;\; x_{r,\ii}^{\alpha}1_{\ii}   \;\; = \;\; x_{r,\ii}^{\alpha} R_{\ii',i_r, \ii''} \left( {}_{\ii}1_{\jj}\right)\left( {}_{\jj}1_{\ii}\right)R_{\ii',i_r, \ii''}
 \;\; = \;\;  R_{\ii',i_r, \ii''} x_{r,\ii}^{\alpha} \left( {}_{\ii}1_{\jj}\right)\left( {}_{\jj}1_{\ii}\right)R_{\ii',i_r, \ii''}.
\]
Sliding dots through the crossings in ${}_{\ii}1_{\jj}$ using \eqref{eq_ijslide} shows that
\[
x_{r,\ii}^{\alpha} \;\; = \;\;
 R_{\ii',i_r, \ii''} \left( {}_{\ii}1_{\jj}\right)x_{s(r),\jj}^{\alpha}\left( {}_{\jj}1_{\ii}\right)R_{\ii',i_r, \ii''} \;\; = \;\; 0
\]
whenever $x_{s(r),\jj}^{\alpha}=0$.  The second claim in the proposition is clear since $x_{s(r),\jj_a'}^{\alpha}=0$ implies $x_{s(r),\jj_a}^{\alpha}=0$.
\end{proof}

\begin{rem} \label{rem_invariance}
The sequence $\ii=i_1\dots i_m$ factors through the sequence $s_r(\ii)$ for any admissible transposition.  Likewise, $s_r(\ii)$ factors through $\ii$.  Therefore, whenever $r<m-1$ then $x_{m,\ii}^b =0$ if and only if $x_{m,s_r(\ii)}^b=0$.  In particular, $x_{m, \ii}$ and $x_{m,\jj}$ have the same nilpotency degree for any $\jj=\jj'i_m$ with $\conf(\jj)=\conf(\ii)$.
\end{rem}

%
\subsection{Main results}
%

\begin{lem} \label{lem_antichain}
Let $\Lambda= \sum_{i \in I} \lambda_i \cdot i \in \N[I]$. Consider $\ii = i_1 \dots i_m \in \seq(\nu)$ with $m$-stable configuration $a_m(\ii)$ and $m$-antigravity bound $b_m$.  If $\ii$ is an $m$-stable sequence, so that $\conf(\ii)=a_m(\ii)$, then $x_{m,\ii}^{b_m}=0$ in $R_{\nu}^{\Lambda}$.
\end{lem}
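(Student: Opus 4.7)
I prove $x_{m,\ii}^{b_m}=0$ by strong induction on the size $m=b-a+1$ of the support interval $[a,b]$. The base case $m=1$ is immediate: $\ii=(c)$, $b_1=\lambda_c$, and the cyclotomic relation $x_{1,\ii}^{\lambda_c}=0$ is a defining relation of $R_\nu^\Lambda$.

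For the inductive step, assume the lemma for all stable sequences of smaller support. By Remark~\ref{rem_invariance} the nilpotency degree of $x_{m,\ii}$ depends only on $\conf(\ii)$ and $i_m=c$, so I may choose a convenient representative in which $i_{m-1}$ is a neighbor of $c$ in $[a,b]$: take $i_{m-1}=c+1$ if $c<b$, otherwise $i_{m-1}=b-1$. Concretely, for $c<b$ take $\ii=(a,a+1,\dots,c-1,b,b-1,\dots,c+1,c)$, and for $c=b$ take $\ii=(a,a+1,\dots,b-1,b)$. Let $S_A$ denote the connected component of $i_{m-1}$ in $[a,b]\setminus\{c\}$ and $S_B=[a,b]\setminus S_A$; these are intervals of size less than $m$ with $\sum_{j\in S_A}\lambda_j+\sum_{j\in S_B}\lambda_j=b_m$.

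The heart of the argument is to establish the two relations
\[
\text{(A)}\;\;x_{m-1,\ii}^{b^{(A)}}=0 \qquad \text{and} \qquad \text{(B)}\;\;x_{m-1,s_{m-1}(\ii)}^{b^{(B)}}=0
\]
with $b^{(A)}:=\sum_{j\in S_A}\lambda_j$ and $b^{(B)}:=\sum_{j\in S_B}\lambda_j$, and then combine them via the identity $\delta_{m-1,s_{m-1}(\ii)}\,\delta_{m-1,\ii}=x_{m-1,\ii}+x_{m,\ii}$ from \eqref{eq_UUzero}, which is valid since $i_{m-1}\cdot i_m=-1$. For (A), the strands of $\ii$ colored by $S_A$ occupy consecutive positions and form a stable sub-sequence of size $|S_A|<m$ with anchor $i_{m-1}$. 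Moving this block to the front of $\ii$ is legal since $S_A$ is disconnected from $S_B\setminus\{c\}$ in $\Gamma$; the inductive hypothesis applies to the resulting stable prefix and lifts to $R_\nu^\Lambda$ because every cyclotomic generator $x_{1,\mathbf{k}}^{\lambda_{k_1}}=0$ used in its derivation extends to a defining relation $x_{1,\mathbf{k}\,\mathbf{k}'}^{\lambda_{k_1}}=0$ of $R_\nu^\Lambda$ once the remaining strands $\mathbf{k}'$ are appended. Proposition~\ref{prop_factorx} then transfers the relation back from the rearranged sequence to $\ii$. For (B) I proceed analogously inside $s_{m-1}(\ii)$: admissible transpositions slide the $c$-strand past the $S_A\setminus\{i_{m-1}\}$-strands (all disconnected from $c$), producing a sequence whose first $|S_B|$ strands form the stable sub-sequence with colors $S_B$ and anchor $c$, and the inductive hypothesis together with Proposition~\ref{prop_factorx} gives (B).

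With (A) and (B) in hand, the finish is purely algebraic. The dot-slide \eqref{eq_ijslide}, valid since $i_{m-1}\neq i_m$, converts (B) into $\delta_{m-1,\ii}\,x_{m,\ii}^{b^{(B)}}=0$; multiplying on the left by $\delta_{m-1,s_{m-1}(\ii)}$ and applying the double-crossing identity gives $(x_{m-1,\ii}+x_{m,\ii})\,x_{m,\ii}^{b^{(B)}}=0$. Iterating yields $x_{m,\ii}^{b^{(B)}+k}=(-1)^k x_{m-1,\ii}^k\,x_{m,\ii}^{b^{(B)}}$, and at $k=b^{(A)}$ the right-hand side vanishes by (A), so $x_{m,\ii}^{b_m}=0$. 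The main obstacle will be the lifting step in (B): the stable sub-sequence is not a prefix of $s_{m-1}(\ii)$ itself, but only of a sequence obtained by non-trivial admissible rearrangement, so one must verify that both the derived nilpotency and the cyclotomic relations invoked in its derivation transfer correctly across these rearrangements, which rests on the disconnection of the moved colors from $c$ and on the invertibility of admissible crossings encoded in Proposition~\ref{prop_factorx}.
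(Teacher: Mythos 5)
Your proof is correct and follows essentially the same route as the paper: the same preferred representative $(a,\dots,c-1,b,\dots,c+1,c)$, the same two stable sub-chains $[a,c]$ and $[c+1,b]$ handled by the induction hypothesis (your (A) and (B) are the paper's \eqref{eq_indhyp1} and \eqref{eq_indhyp2}), and the same final iteration based on the relation $\delta_{m-1,s_{m-1}(\ii)}\delta_{m-1,\ii}=x_{m-1,\ii}+x_{m,\ii}$ from \eqref{eq_UUzero}. The only differences are presentational — you phrase the concluding iteration algebraically rather than diagrammatically and make explicit the lifting of cyclotomic relations from shorter sequences, which the paper does implicitly.
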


\begin{proof}
The proof is by induction on the length $|\ii|=m$.  The base case follows from \eqref{eq_quotient1}.  Assume the result holds for all sequences of the above form with length less than or equal to $m-1$. For the induction step we show that $x_{m,\ii}^{b_m}=0$.  We may assume $\lambda_{i_1} > 0$ otherwise $1_{\ii} =0$ and the result trivially follows. By Remark~\ref{rem_invariance} it suffices to choose a preferred representative for the configuration $\conf(\ii)$.  Choose the representative $\ii = \jj \jj' i_m$ where $\jj=(i_m-r,i_m-(r-1), \dots, i_m-1)$ and $\jj'=(i_m+(m-r-1), i_m+(m-r-2), \dots, i_m+1)$.  It is possible that either $\jj=\emptyset$ or $\jj'=\emptyset$. The idempotent $1_{\ii}$ has the form
\begin{equation}
 1_{\ii} \quad = \quad \xy
 (0,0)*{\up{i_1}};
 (5,0)*{\cdots};
 (10,0)*{\up{\;\;i_{r}}};
 (18,0)*{\up{\;\;i_{r+1}}};
 (23,0)*{\cdots};
 (28,0)*{\up{\;\;i_{m-1}}};
 (36,0)*{\up{\;\;i_{m}}};
 (5,-14)*{\underbrace{\hspace{0.6in}}};
 (23,-14)*{\underbrace{\hspace{0.6in}}};
  (5,-18)*{\jj};
 (23,-18)*{\jj'};
 \endxy \quad \text{and} \qquad \left\{ \;\begin{array}{ccl}
                 i_a \con i_b & \quad & \text{if $b=a+1$ and $a \neq r$} \\
                 & \quad & \text{or $a=r$ and $b=m$,} \\
                 & & \\
                 i_a \cdot i_b = 0 & \quad & \text{otherwise.}
               \end{array} \right. \label{eq_minimalseq}
\end{equation}

First consider the case $\jj' = \emptyset$ so that $|\jj| = r =m-1$.  The definition of $\jj$ is such that  $\conf(\jj) = a_r(\jj)$, so the induction hypothesis implies $x_{r,\jj}^{\delta}=x_{m-1,\jj}^{\delta}=0$, where $\delta=\sum_{j \in {\rm Supp}(\conf(\jj))} \lambda_{j}$.  Since $b_m = \sum_{j \in {\rm Supp}(\conf(\jj i_m))}\lambda_j$ we can write $b_m= \lambda_{i_m} + \delta$ with $\delta >0$ since $\lambda_{i_1} \geq 1$. Using \eqref{eq_UUzero} $x_{m,\ii}^{b_m}$ can be expressed as
\begin{equation}
 x_{m,\ii}^{b_m} \quad = \quad \xy
 (0,0)*{\up{i_1}};
 (5,0)*{\cdots};
 (10,0)*{\up{\;\;i_{m-1}}};
 (18,0)*{\updot{\;\;i_{m}}};
 (22,3)*{b_m};
 \endxy
 \;\; =\;\;
 \xy
 (0,2)*{\reflectbox{\includegraphics[scale=0.5]{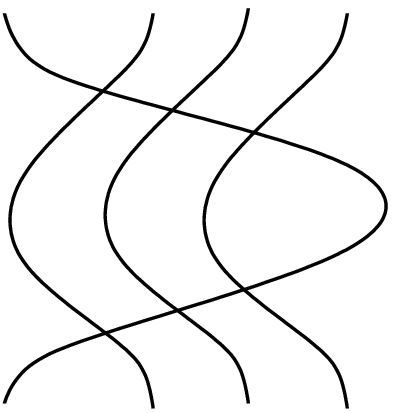}}};
 (10,-10.5)*{\scs i_{m}};(-8,-10.5)*{\scs i_1};(0,-10.5)*{\scs \cdots};
 (8,9.4)*{\bullet}+(7,-1)*{b_m-1};
\endxy
\;\; - \;\;
\xy
 (0,0)*{\up{i_1}};
 (5,0)*{\cdots};
 (10,0)*{\updot{\;\;i_{m-1}}};
 (18,0)*{\updot{\;\;i_{m}}};
 (22,3)*{\qquad b_m-1};
 \endxy
\end{equation}
The first term on the right-hand-side is zero since $b_m-1= \lambda_{i_m}+(\delta-1) \geq \lambda_{i_m}$.  Repeating this argument $\delta$ times on the remaining term above we have
 \begin{equation}
 x_{m,\ii}^{b_m} \quad = \quad
 \xy
 (0,2)*{\reflectbox{\includegraphics[scale=0.5]{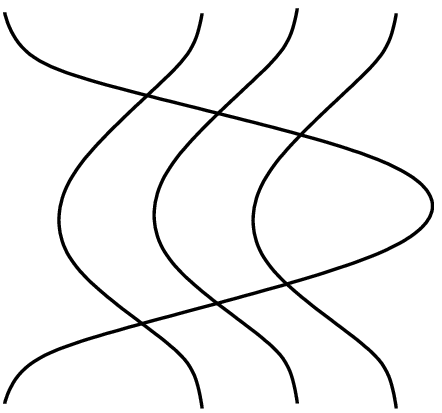}}};
 (12,-10.5)*{\scs i_{m}};(-8,-10.5)*{\scs i_{1}};(0,-10.5)*{\scs \cdots};
 (10,9.9)*{\bullet}+(6,1)*{\scs b_m-\delta};
 (1.5,9.9)*{\bullet}+(4,1)*{\scs \delta-1};
\endxy
\;\;+(-1)^{\delta}\;\;
\xy
 (0,0)*{\up{i_1}};
 (5,0)*{\cdots};
 (10,0)*{\updot{\;\;i_{m-1}}};
 (18,0)*{\updot{\;\;i_{m}}};
 (22,3)*{\qquad b_m-\delta};(13,3)*{\delta};
 \endxy
 \end{equation}
where the first term is zero since $b_m-\delta= \lambda_{i_m}$ and the second term is zero by the induction hypothesis.

It remains to prove the result for $\jj' \neq \emptyset$.  In this case we may assume $\lambda_{i_1} \geq 1$ and $\lambda_{i_{r+1}}\geq 1$ ($\lambda_{i_1} = \lambda_{i_{r+1}}$ if $\jj = \emptyset$), otherwise using that $i_a \cdot i_{r+1}=0$ for all $a \leq r$
\[
 1_{\ii} \quad = \quad \xy
 (0,0)*{\up{i_1}};
 (5,0)*{\cdots};
 (10,0)*{\up{\;\;i_{r}}};
 (18,0)*{\up{\;\;i_{r+1}}};
 (23,0)*{\cdots};
 (28,0)*{\up{\;\;i_{m-1}}};
 (36,0)*{\up{\;\;i_{m}}};
 \endxy \quad = \quad \xy
 (0,2)*{\reflectbox{\includegraphics[scale=0.5]{njpjm.eps}}};
 (10,-10.5)*{\scs i_{r+1}};(-8,-10.5)*{\scs i_1};(-2,-10.5)*{\scs i_2};(2,-10.5)*{\scs \cdots};
 (15,0)*{\cdots};
 (20,0)*{\up{\;\;i_{m-1}}};
 (28,0)*{\up{\;\;i_{m}}};
\endxy
\]
so that $1_{\ii}=0$  in $R_{\nu}^{\Lambda}$ by \eqref{eq_quotient1}, in which case $x_{m,\ii}^{b_m}=0$. Since $\conf(\jj i_m) = a_{r+1}(\jj i_m)$ and $\conf(\jj')=a_{m-r-1}(\jj')$, the induction hypothesis implies that
\[
x_{r+1,\jj i_m}^{\alpha}=x_{m-r-1,\jj'}^{\beta}=0 \qquad \text{for} \quad \alpha= \sum_{j \in{\rm Supp}(\conf(\jj i_m))}\lambda_j, \qquad  \beta=\sum_{j \in{\rm Supp}(\conf(\jj'))}\lambda_j.
\] This implies
\begin{equation} \label{eq_indhyp2}
 \xy
 (-25,0)*{\up{\;\;i_{1}}};
  (-20,0)*{\cdots};
 (-15,0)*{\up{\;\;i_{r}}}; (-16,4)*{\scs };
 (0,2)*{\reflectbox{\includegraphics[scale=0.5]{njpjm.eps}}};
 (10,-10.5)*{\scs i_{m}};(-6,-10.5)*{\scs i_r+1};(0,-10.5)*{\scs \cdots};
 (-9.5,2)*{\bullet}+(-3,0)*{ c};
\endxy \;\; =\;\;0 \qquad \text{for $c \geq \alpha = b_m-\beta$,}
\end{equation}
and using \eqref{eq_UUzero} repeatedly for the disconnected vertices that
\begin{equation} \label{eq_indhyp1}
\xy
 (-16,0)*{\up{i_{1}}};
 (-11,0)*{\cdots};
 (-6,0)*{\up{i_{r}}};
 (0,0)*{\up{i_{r+1}}};
 (5,0)*{\cdots};
 (10,0)*{\updot{\;\;i_{m-1}}};
 (18,0)*{\up{\;\;i_{m}}};
 (22,3)*{\qquad };(13,3)*{b};
 \endxy  = \;\;
 \xy
 (0,2)*{\reflectbox{\includegraphics[scale=0.5]{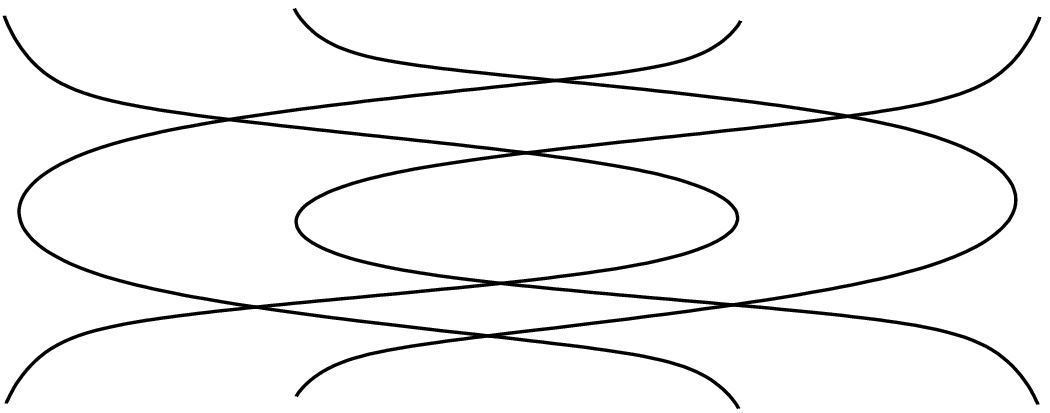}}};
 (-18,2)*{\cdots}; (-18,10)*{\cdots};(-18,-10)*{\cdots};
 (20,-10)*{\cdots}; (19,2)*{\cdots};(19,10)*{\cdots};
 (34,0)*{\up{\;\;i_{m}}};
 (-11,-10.5)*{\scs i_{r}}; (-25,-10.5)*{\scs i_{1}};
 (12.5,-10.5)*{\scs i_{r+1}}; (28,-10.5)*{\scs i_{m-1}};
 (38,3)*{\qquad };(-10.5,1.5)*{\bullet}+(-3,1)*{b};
 \endxy  = \;\;0 \qquad \text{for $b \geq \beta$.}
\end{equation}

The assumption that $\lambda_{i_{r+1}}\geq 1$ implies $\beta \geq 1$. Then
\begin{equation}
 x_{m,\ii}^{b_m} \quad = \quad \xy
 (0,0)*{\up{i_1}};
 (5,0)*{\cdots};
 (10,0)*{\up{\;\;i_{m-1}}};
 (18,0)*{\updot{\;\;i_{m}}};
 (22,3)*{b_m};
 \endxy
 \;\; =\;\;
 \xy
 (-25,0)*{\up{\;\;i_{1}}};
  (-20,0)*{\cdots};
 (-15,0)*{\up{\;\;i_{r}}};
 (0,2)*{\reflectbox{\includegraphics[scale=0.5]{njpjm.eps}}};
 (10,-10.5)*{\scs i_{m}};(-6,-10.5)*{\scs i_r+1};(0,-10.5)*{\scs \cdots};
 (8,9.4)*{\bullet}+(7,-1)*{b_m-1};
\endxy
\;\; - \;\;
\xy
 (0,0)*{\up{i_1}};
 (5,0)*{\cdots};
 (10,0)*{\updot{\;\;i_{m-1}}};
 (18,0)*{\updot{\;\;i_{m}}};
 (22,3)*{\qquad b_m-1};
 \endxy
\end{equation}
where we have used \eqref{eq_UUzero} and the conditions in \eqref{eq_minimalseq}. After sliding the $b_m-1$ dots next to the strand labelled $i_r$ using \eqref{eq_ijslide}, the first term on the right-hand-side is zero by \eqref{eq_indhyp2} since $b_m -1= \alpha + (\beta-1) \geq \alpha$.  Iterating this argument $\beta$ times,
 \[x_{m,\ii}^{b_m} \quad = \quad \xy
 (0,0)*{\up{i_1}};
 (5,0)*{\cdots};
 (10,0)*{\up{\;\;i_{m-1}}};
 (18,0)*{\updot{\;\;i_{m}}};
 (22,3)*{b_m};
 \endxy
 \;\; =
 \xy
 (-25,0)*{\up{\;\;i_{1}}};
  (-20,0)*{\cdots};
 (-15,0)*{\up{\;\;i_{r}}};
 (0,2)*{\reflectbox{\includegraphics[scale=0.5]{njpjmp.eps}}};
 (12,-10.5)*{\scs i_{m}};(-8,-10.5)*{\scs i_{r+1}};(0,-10.5)*{\scs \cdots};
 (10,9.9)*{\bullet}+(6,1)*{\scs b_m-\beta};
 (1.5,9.9)*{\bullet}+(4,1)*{\scs \beta-1};
\endxy
\;\;+(-1)^{\beta}\;\;
\xy
 (0,0)*{\up{i_1}};
 (5,0)*{\cdots};
 (10,0)*{\updot{\;\;i_{m-1}}};
 (18,0)*{\updot{\;\;i_{m}}};
 (22,3)*{\qquad b_m-\beta};(13,3)*{\beta};
 \endxy \]
where the first diagram is zero by \eqref{eq_indhyp2} and the second term is zero by \eqref{eq_indhyp1}.
\end{proof}

\begin{thm} \label{thm_main}
Let $\Lambda= \sum_{i \in I} \lambda_i \cdot i \in \N[I]$  and $\ii=i_1 \dots
i_r\dots  i_m \in \seq(\nu)$.  Then the nilpotency degree of
$x_{r,\ii}$ in $R_{\nu}^{\Lambda}$ is less than or equal to the $r$-antigravity bound $$b_r = \sum_{j \in {\rm Supp}(a_r(\ii))} \lambda_j.$$
\end{thm}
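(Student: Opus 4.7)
I would prove the theorem by strong induction on $m=|\ii|$, combining the combinatorial reductions supplied by antigravity moves with the algebraic tools already assembled: Lemma~\ref{lem_antichain}, Proposition~\ref{prop_anchorc}, Proposition~\ref{prop_factorx}, Proposition~\ref{prop_subset}, and Remark~\ref{rem_invariance}. The base case $m=1$ is immediate from \eqref{eq_quotient1}.

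\emph{First step: reduction to the case $r=m$.} If $r<m$, truncate $\ii$ to $\ii'=i_1\cdots i_r\in\seq(\nu')$ where $\nu'=\sum_{k\le r}i_k$. By the second bullet of the remark following Proposition~\ref{prop_subset}, $a_r(\ii)=a_r(\ii')$ and hence $b_r(\ii')=b_r(\ii)$. The inductive hypothesis, applied to the strictly shorter sequence $\ii'$, gives $x_{r,\ii'}^{b_r}\in\cal{J}_{\Lambda}$ inside $R(\nu')$. The horizontal juxtaposition embedding $R(\nu')\otimes R(\nu'')\hookrightarrow R(\nu)$, with $\nu''$ accounting for the remaining strands $i_{r+1}\cdots i_m$, sends each cyclotomic generator $x_{1,\jj}^{\lambda_{j_1}}$ of $R(\nu')$ to the cyclotomic generator $x_{1,\jj\,i_{r+1}\cdots i_m}^{\lambda_{j_1}}$ of $R(\nu)$, so the identity $x_{r,\ii'}^{b_r}=0$ lifts to $x_{r,\ii}^{b_r}=0$ in $R_{\nu}^{\Lambda}$.

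\emph{Second step: the anchor at the end.} Assume $r=m$. If $\ii$ is $m$-stable then $\conf(\ii)=a_m(\ii)$ and Lemma~\ref{lem_antichain} applies directly. Otherwise, by the characterization of stable configurations, at least one antigravity move (stack, $L$, or square) is applicable to the antigravity configuration of $\ii$ anchored at $i_m$. My plan is to match each such move with an $m$-factorization of $1_{\ii}$, in the sense of Proposition~\ref{prop_factorx}, through one or more sequences $\jj_a=s_a(\ii)$ obtained from $\ii$ by admissible transpositions followed by strand removal, such that $|\jj_a|<m$ and $b_{s_a(m)}(\jj_a)=b_m(\ii)$. By induction $x_{s_a(m),\jj_a}^{b_m}=0$ for each $a$, and Proposition~\ref{prop_factorx} then forces $x_{m,\ii}^{b_m}=0$. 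Preservation of the bound under each reduction is guaranteed by Proposition~\ref{prop_subset}, since the strands being removed correspond to beads that antigravity would in any case discard, so the support of $a_m$ is unchanged.

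\emph{Matching antigravity moves to algebraic reductions.} A stack move reflects two identically coloured strands of $\ii$ separated only by colours disconnected from them; Remark~\ref{rem_invariance} brings them adjacent via admissible transpositions, after which Proposition~\ref{prop_anchorc}, combined with the truncation argument above, eliminates the later strand and produces a shorter sequence with the same bound. The $L$-move is precisely the local configuration treated in Example~\ref{examp_complex}, which supplies the required $m$-factorization through two strictly shorter sequences. The square move is subtler and is handled indirectly using the observation in the paragraph preceding the characterization of stable configurations: the presence of a non-anchor square forces an anchor square, whose simplification produces an $L$-configuration to which the previous case applies, so iterated reduction eventually terminates.

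\emph{Main obstacle.} The core difficulty is the case analysis verifying that every non-$m$-stable $\ii$ admits at least one of these reductions, together with the diagrammatic bookkeeping required to exhibit explicit $m$-factorizations through sequences that are simultaneously shorter \emph{and} share the same antigravity bound $b_m$. Producing the precise factorization for each local configuration, most delicately the square move, is where the real work lies; once those factorizations are in hand, Proposition~\ref{prop_factorx} together with the induction hypothesis closes the argument mechanically.
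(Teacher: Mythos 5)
Your overall strategy---induction on $|\ii|$, reduction to $r=m$, Lemma~\ref{lem_antichain} for the stable case, and $m$-factorizations matched to antigravity moves for the unstable case---is the same as the paper's, and your explicit lifting of $x_{r,\ii'}^{b_r}=0$ from $R(\nu')$ to $R(\nu)$ via the inclusion sending cyclotomic generators to cyclotomic generators is a point the paper leaves implicit. Your stack-move case (Proposition~\ref{prop_anchorc} plus Remark~\ref{rem_invariance} and truncation) and $L$-move case (Example~\ref{examp_complex}) match the paper's treatment.

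The genuine gap is the square move. You propose to dispose of it by citing the observation that a non-anchor square forces an anchor square whose simplification produces an $L$-configuration. That observation only explains why non-anchor squares need not be listed among the antigravity moves; it says nothing about how to treat the anchor square algebraically, and the anchor square does \emph{not} reduce to the $L$-move case: there the repeated colour is the anchor colour itself and the sequence has the local form $\ii'i_m i_{m-2}i_{m-1}i_m$, which fails the adjacency and disconnection hypotheses of Example~\ref{examp_complex}. The paper handles this case with a separate computation: expand $x_{m,\ii}^{b_m}$ using \eqref{eq_UUzero} into three terms, use \eqref{eq_r3_hard} to produce $(m-3)$-factorizations through $\ii'i_{m-2}i_{m-1}i_mi_m$, $\ii'i_{m-2}i_mi_mi_{m-1}$ and $\ii'i_mi_mi_{m-2}i_{m-1}$, and then kill the first two terms using Proposition~\ref{prop_symcomb} (vanishing of the symmetric combination of dots on consecutive identically coloured strands), a tool your proposal never invokes. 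Without some version of this argument the induction does not close. A second, smaller gap: non-$m$-stability does not imply that a stack, $L$, or square move is applicable---it may simply be that some bead $i_a$ is pulled off the abacus by antigravity because its colour is disconnected from every later $i_{a'}$; that case needs its own factorization (through $i_1\dots\widehat{i_a}\dots i_m i_a$ followed by truncation), which your sketch gestures at in the phrase ``strand removal'' but does not actually supply.
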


\begin{proof}
The proof is by induction on the length $|\ii|=m$.  The base case follows from \eqref{eq_quotient1}.  Assume the result holds for all sequences of the above form with length less than or equal to $m-1$.    We show that $x_{m,\ii}^{b_m} = 0$. For the induction step we show that one of the following must be true.
\begin{enumerate}
  \item  The sequence $\ii$ is $m$-stable, that is, $\conf(\ii)=a_m(\ii)$.
  \item  The nilpotency of the sequence $\ii$ is bound above by the nilpotency of a sequence $s(\ii)=\jj=j_1\dots j_m$ for some permutation $s\in S_m$ with $s(m)<m$.  Furthermore, the $s(m)$-antigravity bound for $\jj$ is the same as the $m$-antigravity bound $b_m$ for $\ii$.
\end{enumerate}
In the first case the theorem follows by Lemma~\ref{lem_antichain}, and in the second case the theorem follows from the induction hypothesis applied to the truncated sequence $\jj'=j_1 \dots j_{s(m)}$.

Consider the sequence $\ii=i_1\dots i_m$ in antigravity with anchored bead $i_m$.  If any beads are removed from the $\Gamma$-abacus in $m$-antigravity let the $i_a$ be the first bead to be removed.  This means that $i_a$ cannot be connected in $\Gamma$ to any $i_{a'}$ for $a' > a$, so that the sequence $\ii$ has an $m$-factorization through the sequence $s(\ii)=\jj:=i_1 \dots i_{a-1} i_{a+1} \dots i_m i_a$. It is clear that the $s(m)-$antigravity bound $b$ for the truncated sequence $\jj'=i_1\dots i_{a-1} i_{a+1} \dots i_m$ is the same as the antigravity bound $b_m$ for $\ii$.  The induction hypothesis implies that $x_{s(m),\jj'}^{b}=x_{s(m),\jj'}^{b_m}=0$, so $x_{m,\ii}^{b_m}=0$ by Proposition~\ref{prop_factorx} since $\ii$ has an $m$-factorization through $\jj$.   Thus, it suffices to assume that all beads are trapped below the anchored bead $i_m$ in antigravity.

Consider the rightmost $r$ such that $i_r \dots i_{m-1}i_m$ is not $m$-stable.  The antigravity configuration must contain one of the following unstable forms:
\begin{equation} \label{proof_anti}
 \xy
  (0,0)*{\includegraphics[scale=0.8]{anti1.eps}};
  (0,-4)*{r};
 \endxy \qquad \quad \xy
  (0,0)*{\includegraphics[scale=0.8]{anti2.eps}}; (-2,-4)*{r};
 \endxy  \qquad  \quad  \xy
  (0,0)*{\includegraphics[angle=180, scale=0.8]{anti2.eps}}; (2,-4)*{r};
 \endxy
 \qquad \quad
 \xy
  (0,0)*{\includegraphics[scale=0.8]{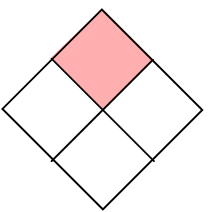}}; (0,-4)*{r};(0,4)*{\textbf{\textit{m}}};
 \endxy
\end{equation}
If the unstable configuration has the first form in \eqref{proof_anti}, then if the top box is the anchor Remark~\ref{rem_invariance} implies it suffices to assume $r=m-1$ for the first configuration. Proposition~\ref{prop_anchorc} then implies that the nilpotency degree of the sequence $x_{m,\ii}$ is bound above by the nilpotency degree of $x_{m-1,\jj'}$ for the shorter sequence $\jj'$ given by truncating $\ii$ at the $(m-1)$st term.  Because sequences $\ii$ and $\jj'$ are related by a stack antigravity move their antigravity bounds are the same.  Hence, the result follows by the induction hypothesis.

If the unstable configuration has the first form in \eqref{proof_anti} and the top box is not the anchor, then by Remark~\ref{rem_invariance} it suffices to consider the representative of $\conf(\ii)$ where the upper box corresponds to the $(r+1)$st bead, that is, $i_r=i_{r+1}$ so that $\ii$ has the form $\ii = \ii'i_r i_{r} i_{r+2} \ii''$ with $i_r \con i_{r+2}$.  As in Example~\ref{example_iij}, we can write
\begin{eqnarray} \nn
 \xy
(0,0)*{\up{i_1}};
 (5,0)*{\cdots};
 (10,0)*{\up{\;\;i_{r}}};
 (18,0)*{\up{\;\;i_{r}}};
 (26,0)*{\up{\;\;i_{r+2}}};
 (31,0)*{\cdots};
 (36,0)*{\up{\;\;i_{m}}};
 \endxy
\refequal{\eqref{eq_examp_iij}} \quad -\;
\xy (0,0)*{\up{i_1}};
 (5,0)*{\cdots};(18,1.5)*{\includegraphics[angle=180,scale=0.5]{ccc2.eps}};
(31,0)*{\cdots};
 (36,0)*{\up{\;\;i_{m}}}; (12,-7)*{\bullet};
  (12,-10.5)*{\scs i_r};(19,-10.5)*{\scs i_{r}};(27,-10.5)*{\scs i_{r+2}}; \endxy
\;\; + \;\; \xy (0,0)*{\up{i_1}};
 (5,0)*{\cdots};(18,1.5)*{\includegraphics[angle=180,scale=0.5]{ccc2.eps}};
(31,0)*{\cdots};
 (36,0)*{\up{\;\;i_{m}}}; (17.5,9)*{\bullet};
  (12,-10.5)*{\scs i_r};(19,-10.5)*{\scs i_{r}};(27,-10.5)*{\scs i_{r+2}}; \endxy
\end{eqnarray}
Since $i_r$ is the first vertex where one of the configurations in \eqref{proof_anti} appears,  we can assume that $i_r$ is not connected to any of the vertices in $\ii''$.  Pulling the strand labelled $i_r$ to the far right gives an $m$-factorization
\[
 \xy
(0,0)*{\up{i_1}};
 (5,0)*{\cdots};
 (10,0)*{\up{\;\;i_{r}}};
 (18,0)*{\up{\;\;i_{r}}};
 (26,0)*{\up{\;\;i_{r+2}}};
 (31,0)*{\cdots};
 (36,0)*{\up{\;\;i_{m}}};
 \endxy \;\; = \;\;
 -\;
 \xy
 (-20,0)*{\up{\;\;i_{1}}};
  (-15,0)*{\cdots};
 (6,2)*{\includegraphics[scale=0.5]{ccc4.eps}};
(7,0)*{ \cdots};
 (-9,-10.5)*{\scs i_{r}};(-2,-10.5)*{\scs i_{r}};(3.5,-10.5)*{\scs i_{r+2}};
 (9.5,-10.5)*{\scs i_{r+3}};(15,-10.5)*{\scs \cdots};
 (22,-10.5)*{\scs i_{m}}; (-8,-6.5)*{\bullet};
\endxy
\;\; + \;\;
 \xy
 (-20,0)*{\up{\;\;i_{1}}};
  (-15,0)*{\cdots};
 (6,2)*{\includegraphics[scale=0.5]{ccc4.eps}};
(7,0)*{ \cdots};
 (-9,-10.5)*{\scs i_{r}};(-2,-10.5)*{\scs i_{r}};(3.5,-10.5)*{\scs i_{r+2}};
 (9.5,-10.5)*{\scs i_{r+3}};(15,-10.5)*{\scs \cdots};
 (22,-10.5)*{\scs i_{m}}; (-2.5,10)*{\bullet};
\endxy
\]
of $\ii$ through copies of the sequence  $s(\ii):=\;\ii'i_r i_{r+2} \ii'' i_{r}$ where $s(m)=m-1$.  Applying the induction hypothesis to the truncated sequence $\jj':=\ii'i_ri_{r+2}\ii''$ implies $x_{s(m),\jj'}^b =0$, where $b$ is the $s(m)$-antigravity bound for the sequence $\jj'$.  Since $x_{s(m),\jj'}^b =0$ implies that $x_{s(m),\jj'i_r}^b =0$, the factorization of $\ii$ through $\jj'i_m$ implies $x_{m,\ii}^b=0$. However, the sequence $\jj'$ is obtained from $\ii$ by applying a stack antigravity move.  Therefore, the $s(m)$-antigravity bound $b$ for the sequence $\jj'$ is the same as the $m$-antigravity bound $b_m$ for the sequence $\ii$ and $x_{m,\ii}^{b_m}=0$ as desired.

If the unstable configuration has the second or third form of \eqref{proof_anti}, then by Remark~\ref{rem_invariance} it suffices to consider the representative of $\conf(\ii)$ of one of the two forms
\[
\xy
  (0,0)*{\includegraphics[scale=.9]{anti2.eps}};
  (-2,-5)*{ r}; (2.5,0)*{\scs r+1};(-2.5,4.5)*{\scs r+2};
 \endxy  \qquad \text{or} \qquad  \xy
  (0,0)*{\includegraphics[angle=180, scale=0.9]{anti2.eps}};
(2,-5)*{ r}; (-2.5,0)*{\scs r+1};(2.5,4.5)*{\scs r+2};
 \endxy
\]
In either case, Example \eqref{examp_complex} gives an $m$-factorization of
$\ii=\ii'i_ri_{r+1}i_r \ii''$ through sequences $\{\jj,\kk\}$ where $\jj= s(\ii)= \ii'i_{r+1}i_r \ii''i_r$ and $\kk=s'(\ii)=\ii'i_r \ii''i_ri_{r+1}$.  By examining  the resulting configurations it is easy to see that the $s(m)$-antigravity bound $b$ for $\jj$ is greater than or equal to the $s'(m)$-antigravity bound $b'$ for $\kk$. Hence, if we set $\jj'=\ii'i_{r+1}i_r \ii''$ and $\kk' = \ii'i_r \ii''$ then by the induction hypothesis both $x_{s(m),\jj'}^b=x_{s'(m),\kk'}^b=0$, implying $x_{s(m),\jj}^b=x_{s'(m),\kk}^b=0$.  But the sequence $\jj$ is obtained from the sequence $\ii$ by applying an antigravity $L$-move.  Therefore, $b=b_m$ and $x_{m,\ii}^{b_m}=0$ by Proposition~\ref{prop_factorx}.

If the unstable configuration has the last form in \eqref{proof_anti}, then by Remark~\ref{rem_invariance} it suffices to consider the representative of $\conf(\ii)$ of the form
\[
\xy
  (-2.5,0)*{\includegraphics[scale=.9]{square-move.eps}};
  (-2.5,-5)*{\scs  m-3}; (-7.5,0)*{\scs m-1};(2.5,0)*{\scs m-2};(-2.5,4.5)*{\scs m};
 \endxy
\]
so that $\ii = \ii'i_m i_{m-2}i_{m-1}i_m$ for some sequence $\ii'$.
Working locally around these last four strands, repeatedly apply \eqref{eq_UUzero} to slide all the dots from right to left, so that $x_{m,\ii}^{b_m}$ can be rewritten as
\begin{eqnarray}
  \sum_{\ell_1 + \ell_2 = b_m-1}
\xy
 (-6,0)*{\up{i_m}};
 (2,0)*{\up{\;\;i_{m-2}}};
(13,0)*{\twocross{\;\;i_{m-1}}{\;\;i_m}};
(-6,0)*{\bullet}+(3,1)*{\scs \ell_1};
(12,0)*{\bullet}+(-3,1)*{\scs \ell_2};
 \endxy
 \;\;-\;\;
 \sum_{\ell_1 + \ell_2 = b_m-1}
 \xy
 (0,0)*{\includegraphics[scale=0.5]{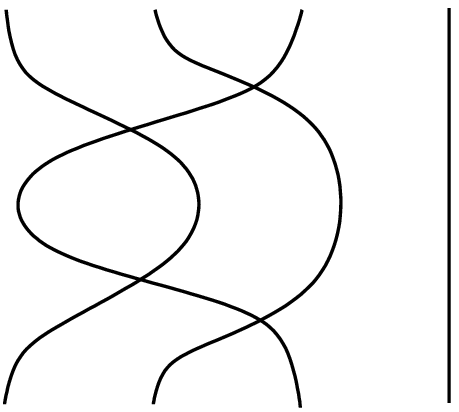}};
 (-11,-12.5)*{\scs i_{m}};(-4,-12.5)*{\scs i_{m-1}};(5,-12.5)*{\scs i_{m-2}};(12,-12.5)*{\scs i_{m}};
 (11,0)*{\bullet}+(3,1)*{\scs \ell_2};
 (-1.5,0)*{\bullet}+(3,1)*{\scs \ell_1};
\endxy
 \;\;+\;\;
 \xy
 (-6,0)*{\updot{i_m}};
 (2,0)*{\up{\;\;i_{m-2}}};
 (10,0)*{\up{\quad i_{m-1}}};
 (18,0)*{\up{\;\;i_{m}}};
 (-9.4,3)*{b_m};
 \endxy \nn \\  \label{eq_dproof}
\end{eqnarray}
Using \eqref{eq_r3_hard}, the first two terms above have $(m-3)$-factorizations through the sequences
\[ \jj_1=\ii'i_{m-2}i_{m-1}i_mi_m, \qquad \jj_2=\ii'i_{m-2}i_mi_mi_{m-1}, \qquad
\jj_3=\ii'i_mi_mi_{m-2}i_{m-1}.\]

By Proposition~\ref{prop_subset} and the induction hypothesis, we have that $x_{m-a,\jj_a}^{b_m}=0$ for $a \in \{1,2,3\}$ and that $x_{m-3,\ii'i_{m}}^{b_m}=0$.  The first two terms in \eqref{eq_dproof} are zero because they can be written as a linear combination of terms that contain the local configuration
\begin{equation}
 \sum_{ \xy (0,0)*{\scs \ell_1 +\ell_{2}}; (0,-3)*{\scs =b_m-1}; \endxy}
 \; \xy (-3.5,0)*{\supdot{\; i_m}};
    (3.5,0)*{\supdot{\; i_m}}; (-1,2)*{\scs \ell_1};(6,2)*{\scs \ell_2};
    \endxy
\end{equation}
and are therefore equal to zero by Proposition~\ref{prop_symcomb}.  The third term in \eqref{eq_dproof} is zero since we have shown $x_{m-3,\ii'i_{m}}^{b_m}=0$. Hence, all terms in \eqref{eq_dproof} are zero showing that the $x_{m,\ii}^{b_m}=0$.

Finally, if none of the unstable configurations in \eqref{proof_anti} occur, then the configuration $\conf(\ii)$ is the same as $a_m(\ii)$, so the result holds by Lemma~\ref{lem_antichain}.
\end{proof}

It is clear that the antigravity bound $b_r$ for the nilpotency of $x_{r,\ii}$ in $R_{\nu}^{\Lambda}$ is always less than or equal to the level $\ell(\Lambda)$.  Therefore, we have the following Corollary to Theorem~\ref{thm_main}.

\begin{cor}[Brundan--Kleshchev Conjecture]
If $\ell=\ell(\Lambda)$ is the level of $\Lambda$,  then $x_{r,\ii}^{\ell}=0$ in $R_{\nu}^{\Lambda}$ for any sequence $\ii\in \seq(\nu)$ and any $1 \leq r \leq m$.
\end{cor}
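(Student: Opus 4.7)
The plan is to derive this corollary as an immediate consequence of Theorem~\ref{thm_main} by verifying the elementary inequality $b_r \leq \ell(\Lambda)$. Recall that by definition
\[
b_r = \sum_{j \in {\rm Supp}(a_r(\ii))} \lambda_j, \qquad \ell(\Lambda) = \sum_{i \in I} \lambda_i,
\]
and that $\Lambda = \sum_{i \in I} \lambda_i \cdot i \in \N[I]$ has non-negative coefficients $\lambda_i$. The first step is simply to observe that ${\rm Supp}(a_r(\ii)) \subseteq I$, so extending the sum to all of $I$ only introduces additional non-negative terms, giving $b_r \leq \ell(\Lambda)$.

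The second step is to combine this inequality with Theorem~\ref{thm_main}. Since Theorem~\ref{thm_main} tells us $x_{r,\ii}^{b_r} = 0$ in $R_{\nu}^{\Lambda}$, and $\ell \geq b_r$, we can write $x_{r,\ii}^{\ell} = x_{r,\ii}^{\ell - b_r} \cdot x_{r,\ii}^{b_r} = 0$. This holds for every sequence $\ii \in \seq(\nu)$ and every $1 \leq r \leq m$, yielding the conjecture.

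There is no real obstacle here — all the substance is packed into Theorem~\ref{thm_main} and Lemma~\ref{lem_antichain}, whose proofs crucially use the bead and runner formalism together with the antigravity moves and the local relations \eqref{eq_UUzero}--\eqref{eq_r3_hard}. The only content in the corollary itself is the trivial observation that restricting a sum of non-negative integers to a subset can only decrease it, so one sentence of justification suffices.
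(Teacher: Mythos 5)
Your argument is correct and is exactly the paper's own: the authors likewise note that $b_r \leq \ell(\Lambda)$ because ${\rm Supp}(a_r(\ii)) \subseteq I$ and the $\lambda_i$ are non-negative, and then invoke Theorem~\ref{thm_main}. Nothing further is needed.
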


\begin{rem}
In general the antigravity bound is not tight.  For example, Proposition~\ref{prop_symcomb} shows that if $\conf(\ii)$ contains a sub-configuration of the form
\[
 \xy
 (0,0)*{\includegraphics[scale=0.5]{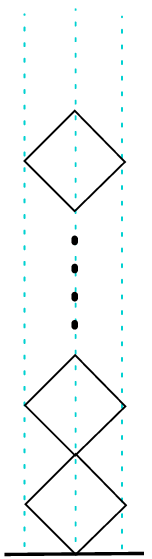}};
 (0,-16)*{\scs i}; (5,-3)*{\left.\begin{array}{c}\; \\\;  \\ \; \\ \; \\\; \end{array} \right\}}; (15,-3)*{\lambda_{i}+1};
 \endxy
\]
then the idempotent $1_{\ii}=0$ in $R_{\nu}^{\Lambda}$, so that $x_{r,\ii}=0$ for all $r$.  More generally, if for any term $i_r$ the configuration $\conf(\ii)$ has a local configuration of the form
\[
 \xy
 (0,0)*{\includegraphics[scale=0.5]{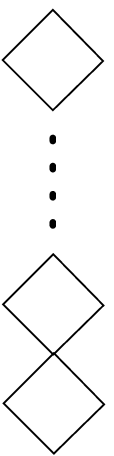}};
 (0,-9)*{r}; (5,0)*{\left.\begin{array}{c}\; \\\;  \\ \; \\ \; \\\; \end{array} \right\}}; (15,1)*{b_r+1};
 \endxy
\]
then Proposition~\ref{prop_symcomb}, together with Theorem~\ref{thm_main}, imply $1_{\ii} =0$ in $R_{\nu}^{\Lambda}$.  Furthermore,  if after applying antigravity moves to $\conf(\ii)$ a configuration of the above form appears, then it is not hard to check that $1_{\ii}=0$ in $R_{\nu}^{\Lambda}$.

We do not know of any sequences $\ii$ where $1_{\ii} \neq 0$ and the antigravity bound is not tight.
\end{rem}


\bibliographystyle{alpha}

%

\vspace{0.1in}

\noindent A.H.:  { \sl \small Department of Mathematics, University of California, Riverside, Riverside, CA 92521} \newline \noindent
  {\tt \small email: alex@math.ucr.edu}

\vspace{0.1in}

\noindent A.L.:  { \sl \small Department of Mathematics, Columbia University, New
York, NY 10027} \newline \noindent
  {\tt \small email: lauda@math.columbia.edu}

%
\end{document}